\theoremstyle{plain}
\newtheorem{theorem}{Theorem}[section]
\newtheorem{proposition}[theorem]{Proposition}
\newtheorem{lemma}[theorem]{Lemma}
\theoremstyle{definition}
\newtheorem{definition}[theorem]{Definition}
\newtheorem{remark}[theorem]{Remark}
\newtheorem{example}[theorem]{Example}
\theoremstyle{plain}
\newtheorem{notation}[theorem]{Notation}
\numberwithin{equation}{section}
\numberwithin{table}{section} 
\def\<{\left<}
\def\>{\right>}
\def\k{\mathsf{k}}
\title{The poset of the nilpotent commutator of a nilpotent matrix}
\author{Leila Khatami} 
\begin{document}
\maketitle
\let\thefootnote\relax\footnotetext{MSC 2010:  05E40, 06A11, 14L30, 15A21}
\let\thefootnote\relax\footnotetext{Keywords: Jordan type, nilpotent matrix, commutator, poset}

\begin{abstract}
Let $B$ be an $n \times n$ nilpotent matrix with entries in an infinite field $\k$. Assume that $B$ is in Jordan canonical form with the associated Jordan block partition $P$. In this paper, we study a poset $\mathcal{D}_P$ associated to the nilpotent commutator of $B$ and a certain partition of $n$, denoted by $\lambda_U(P)$,  defined in terms of the lengths of unions of special chains in $\mathcal{D}_P$. Polona Oblak associated to a given partition $P$ another partition $\mbox{Ob}(P)$ resulting from a recursive process. She conjectured that $\mbox{Ob}(P)$ is the same as the Jordan partition $Q(P)$ of a generic element of the nilpotent commutator of $B$. Roberta Basili, Anthony Iarrobino and the author later generalized the process introduced by Oblak. In this paper we show that all such processes result in the partition $\lambda_U(P)$. 

%

\end{abstract}
\section*{Introduction}
Let $\k$ be an infinite field and  $B$ a nilpotent $n\times n$ matrix with entries in $\k$. Suppose that $B$ is in  Jordan canonical form with associated Jordan block partition $P$. Recall that the centralizer and the nilpotent centralizer of $B$ are, respectively, defined as follows:

$$\begin{array}{ll}
\mathcal{C}_B=&\{A \in \mathcal{M}at_n(\k) \, \mid  \, AB=BA \}, \\
\mathcal{N}_B=&\{A \in \mathcal{M}at_n(\k) \, \mid  \, AB=BA \mbox{ and } A \mbox{ is nilpotent} \}.
\end{array}$$
Here $\mathcal{M}at_n(\k)$ denotes the set of all $n \times n$ matrices with entires in $\k$.

It is well known that $\mathcal{N}_B$ is an irreducible algebraic variety (see \cite[Lemma 1.5]{BI}). Therefore, there is a unique partition of $n$ corresponding to the Jordan type of a generic element of $\mathcal{N}_B$. We denote this unique partition by $Q(P)$. The map $P \to Q(P)$ has been studied by different authors (see \cite{BI}, \cite{BIK}, \cite{KO}, \cite{Oblak}, and \cite{Pa}). It is known, by the work of T.~ Ko\v{s}ir and P. Oblak (\cite{KO}), using also a result of R. Basili and A. Iarrobino (\cite{BI}), that if $\k$ has characteristic zero then the map $P \to Q(P)$ is idempotent: $Q(Q(P))=Q(P)$. The number of parts of the partition $Q(P)$ is also completely determined by R. Basili (\cite[Proposition 2.4]{Basili03} and \cite[Theorem 2.17]{BIK}). In \cite[Theorem 6]{Oblak}, P. Oblak obtains a formula for the index-- largest part -- of the partition $Q(P)$ when char $\k=0$. Her result is generalized over any infinite fleld $\k$ in \cite{IK} by A. Iarrobino and the author.

In this paper, we work with a poset $\mathcal{D}_P$ determined by the partition $P$. The poset is closely connected to $\mathcal{U}_B$, a maximal nilpotent subalgebra of the centralizer $\mathcal{C}_B$. The poset $\mathcal{D}_P$ and the subalgebra $\mathcal{U}_B$ were implicitly used in \cite{KO} and \cite{Oblak}, and were defined in \cite{BIK}.

We review the definition of $\mathcal{D}_P$ in the first section and also recall the classical partition invariant $\lambda(P)=\lambda(\mathcal{D}_P)$ of the poset $\mathcal{D}_P$, defined in terms of the lengths of unions of chains in $\mathcal{D}_P$. We then define and study a partition, $\lambda_U(P)$,  associated to the poset $\mathcal{D}_P$ and always dominated by $\lambda(P)$. This new partition is also defined in terms of the lengths of unions of chains in $\mathcal{D}_P$, but this time the choice of chains is restricted to special types of chains that we call $U$-chains. The $U$-chains are closely related to a recursive process introduced by P.Oblak and generalized in \cite{BIK}. In Theorem \ref{unique Oblak} we prove that any such process gives rise to the partition $\lambda_U(P)$. P. Oblak also conjectured that the partition resulting from the process she suggested is the same as $Q(P)$ and in \cite{Oblak} she proves her conjecture for a partition $P$ such that $Q(P)$ has at most 2 parts and $\k=\mathbb{R}$. In \cite{IK}, we show for an infinite field $\k$  that $\lambda_U(P)$ is always dominated by $Q(P)$, which proves ``half" of Oblak's conjecture. In \cite{minpart}, we will give an explicit formula for the smallest part of $\lambda_U(P)$ and prove that it is the same as the smallest part of $\lambda(P)$, and thus also $Q(P)$, by results of \cite{IK}. Thus we give an explicit formula for $Q(P)$ when it has at most 3 parts. 


\medskip

\noindent{\bf Acknowledgement.} The author is grateful to A. Iarrobino for invaluable discussions on the topic, as well as for his comments and suggestions on the paper. The author is also thankful to Bart Van Steirteghem and to Toma\v{z} Ko\v{s}ir for their thorough comments on a draft of this paper.

\section{Poset $\mathcal{D}_P$ and $U$-chains}\label{D}

\noindent {\bf Notation.} Throughout this paper $n$ will denote a positive integer and $P$ a partition of $n$. For any positive integer $p$, $n_p \geq 0$ denotes the multiplicity of the part $p$ in $P$.  
\par
Let $V$ be an $n$-dimensional $\k$-vector space and fix a nilpotent linear transformation $T \in \mbox{End}_{\k}(V)$. Let $B$ be the Jordan canonical form of $T$ with Jordan block partition $P=(p_s^{n_{p_s}}, \cdots, p_1^{n_{p_1}})$ such that $p_s>\cdots>p_1$. So there is a decomposition of $V$  into $B$-invariant subspaces,  $$V=\oplus V_{p_i,k}, \, \, \, \, 1 \leq k \leq n_{p_i} \mbox{ and }1\leq i \leq s.$$ For each $p_i$ and each $1 \leq k \leq n_{p_i}$, we choose a cyclic vector $(1,p_i,k)$ for $V_{p_i,k}$, which determines the basis 
\begin{equation}\label{basis}
\{(u,p_i,k)=B^{u-1}(1,p_i,k) \, \mid  \, u=1, \cdots, p_i\}
\end{equation} 
for $V_{p_i,k}$.  Let $W_{i}$ be the subspace of $V$ spanned by the cyclic vectors $(1,p_i,k)$, where $k \in \{1, \cdots, n_{p_i}\}$.

Define $\pi_i: \mathcal{C}_B \to \mbox{End}_{\k}(W_i)\cong\mathcal{M}at_{n_{p_i}}(\k)$ by sending a matrix $C \in \mathcal{C}_B$ to the endomorphism obtained by first restricting $C$ to $W_i$ and then projecting to $W_i$. It is well known that, up to isomorphism, the map $$\pi=\prod_{i=1}^s\pi_i :\mathcal{C}_B \to \prod_{i=1}^s  \mbox{End}_{\k}(W_i)$$ is the canonical projection from $\mathcal{C}_B$ to its semi-simple quotient (see \cite[Lemma 2.3]{Basili03}, \cite[Theorem 2.3]{BIK}, \cite[Theorem 6]{HW}). 

\begin{definition}
For each $1\leq i \leq t$, let  $\frak{U}_i \subset  \mbox{End}_{\k}(W_i)$ denote the set of all strictly upper triangular elements of $ \mbox{End}_{\k}(W_i)$. Set $\frak{U} =\prod_{i=1}^t  \frak{U}_i$ and $\mathcal{U}_B=\pi^{-1}(\frak{U})$. 
\end{definition}

It is easy to see that for any element $N \in \mathcal{N}_B$, there is a unit $C \in \mathcal{C}_B$ such that $CNC^{-1} \in \mathcal{U}_B$ (see \cite[Lemma 2.2]{BIK}). Thus the Jordan partition of a generic element of $\mathcal{N}_B$ is that of a generic element of $\mathcal{U}_B$.

To a partition $P$, we associate a poset $\mathcal{D}_P$ whose elements are the basis for $V$ from equation \ref{basis}. We next define the partial order on $\mathcal{D}_P$, which will satisfy, for all $v, v' \in \mathcal{D}_P$
\begin{equation}\label{U and D}
v \leq v' \iff \exists A \in \mathcal{U}_B \mbox{ such that } A \, v \, \mid _{v'} \neq 0 \, \,\,\,(\mbox{See }\cite[\mbox{Equation } 2.18]{BIK}).
\end{equation}

We visualize $\mathcal{D}_P$ by its covering edge diagram, a digraph, which we will also denote by $\mathcal{D}_P$. We say that the vertex $v'$ {\it covers}  the vertex $v$ if $ v<v'$ and there is no vertex $v''$ with $v<v''< v'$. There is an edge from $v$ to $v'$ in the digraph if and only if $v'$ covers $v$.

\begin{definition}\label{poset}

Let $P=(p_s^{n_{p_s}}, \cdots, p_1^{n_{p_1}})$ be a partition of $n$ with $p_s>\cdots>p_1$  and $n_{p_i} > 0$ for $1 \leq i \leq s.$ We define the [covering edge] diagram of $\mathcal{D}_P$ as follows. (See Figure \ref{poset figure}.)

\begin{itemize}
\item Vertices of the diagram of $\mathcal{D}_P$:

 For each $1 \leq i \leq t$, there are $n_{p_i}$ rows each consisting of $p_i$ vertices labeled by triples $(u, p_i, k)$ such that $1\leq u \leq p_i$ and $1 \leq k \leq n_{p_i}$. For each $p_i$, we arrange the vertices in a way that the first and last components of the triple are increasing when we go from left to right and from bottom to top, respectively. 
 
 We say that a vertex of the form $(u,p_i,k)$ is a vertex in {\it level} $p_i$. 
 
\item Covering edges of the diagram of $\mathcal{D}_P$: 
\begin{itemize}
\item[i.] For $1<i \leq s$, the edge $\beta_{p_{i}, p_{i-1}}$ from the top vertex $(u,p_i,n_{p_i})$ of any column in the rows corresponding to $p_i$ to the bottom vertex $(u, p_{i-1}, 1)$ in the rows corresponding to $p_{i-1}$. 

\item[ii.] For $1\leq i< s$, the edge $\alpha_{p_{i}, p_{i+1}}$ from the top vertex $(u,p_i,n_{p_i})$ of any column in the rows corresponding to $p_i$ to the bottom vertex $(u+p_{i+1}- p_{i}, p_{i+1}, 1)$ in the rows corresponding to $p_{i+1}$.

\item[iii.] For $1 \leq i \leq s$, $1 \leq u \leq p_i$ and $1 \leq k < n_{p_i}$, the upward arrow $e_{(u,p_i,k)}$ from $(u,p_i, k)$ to $(u,p_i, k+1)$ .

\item[iv.] For any isolated $p_i$ ({\it i.e.} $p_{i+1}-p_i>1$ and $p_i-p_{i-1}>1$) and any $1 \leq u <p_i$, $\omega_{p_i}$ from $(u, p_i, n_{p_i})$ to $(u+1, p_i, 1)$.

\end{itemize}
\end{itemize}

\end{definition}

\begin{figure}[htb]\label{poset figure}
\begin{center}
\includegraphics[scale=.3]{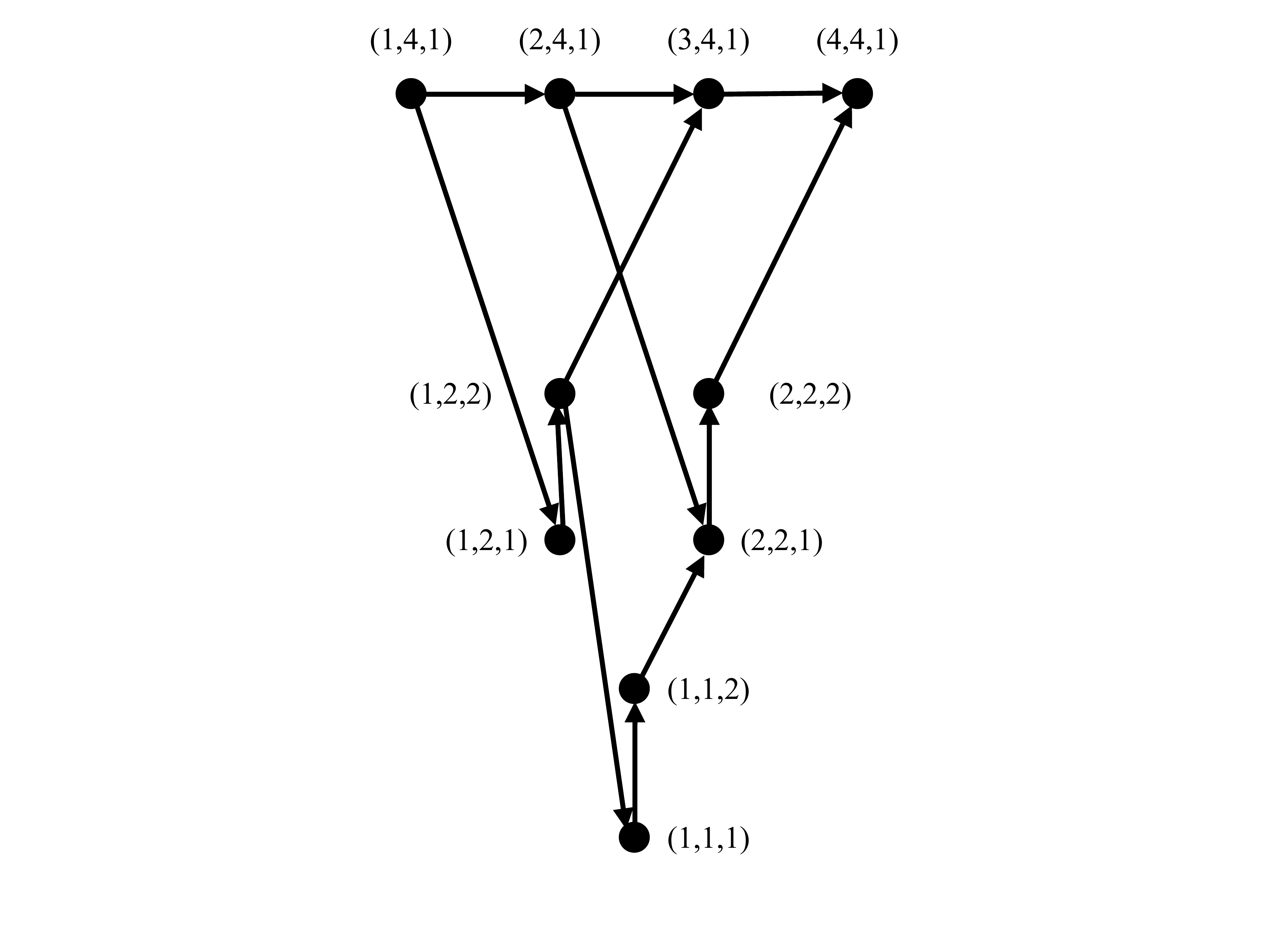}
\end{center}
\vspace{-.2in}
\caption{Poset $\mathcal{D}_P$ for $ P=(4,2^2,1^2)$}
\end{figure}

We will associate to $\mathcal{D}_P$ several partitions. The first is the classical partition associated to a poset, introduced by Greene, Saks and others, and used in different areas of discrete mathematics and algebraic geometry(see \cite{Greene}, \cite{Saks} and the excellent exposition in \cite{Britz-Fomin}). 

\par 

Recall that a {\it chain} is a totally ordered subset of a poset $\mathcal{D}$, whose {\it length} is its cardinality. We say a chain $C$ is {\it maximum}, if it has the maximum cardinality among all chains of the poset.

\begin{definition}\label{lambda}
To a poset $\mathcal{D}$ of cardinality $n$, the partition  $\lambda(\mathcal{D})$ of $n$ is assigned as follows. For $k=0, 1, \cdots$, let $c_k$ denote the maximum cardinality of a union of $k$ chains in $\mathcal{D}$. Let $\lambda_k=c_k-c_{k-1}$ for all $k \geq 1$ and define $\lambda(\mathcal{D})=(\lambda_1, \lambda_2, \cdots)$.
\end{definition}

\begin{notation} Suppose that $P$ is a partition of $n$ and $\mathcal{D}_P$ is the corresponding poset. We denote $\lambda(\mathcal{D}_P)$, by $\lambda(P)$. 
\end{notation}

\begin{definition}
A partition is {\it almost rectangular} if its biggest and smallest parts differ by at most one. 

Note that any partition $P$ can be written as $P(1) \cup \cdots \cup P(r)$, where each $P(i)$ is an almost rectangular subpartition. The minimum number $r$ in any such decomposition is denoted by $r_P$. 
\end{definition} 
In \cite[Proposition 2.4]{Basili03} and \cite[Theorem 2.17]{BIK}, it is proved that $Q(P)$ has exactly $r_P$ parts.

\begin{example}
Partition $P=(3,3,2,2,2)$ is almost rectangular and in particular $r_P=1$. As for $Q=(7, 2,2,1)$, we have $r_Q=2$.
\end{example}

\begin{definition}\label{general U chain}
Let $P=(\ldots, p^{n_p}, \ldots)$ be a partition of $n$ (here $n_p\geq 0)$. For a positive integer $r$ and a set $\frak{A}=\{a_1,a_1+1, \cdots , a_r,a_r+1\}\subset\mathbb{N}$ such that $a_1<a_1+1 <\cdots< a_r< a_{r}+1$, we define the {\it $r$-$U$-chain} $U_{\frak{A}}$ as follows: 
$$\begin{array}{c}
U_{\frak{A}}=\cup_{i=1}^{r}S_{{\frak{A}};i},\mbox{ where}\\
S_{{\frak{A}};i}=\{(u,p,k)\in \mathcal{D}_P\,\mid \, p\in\{a_i, a_i+1\} \mbox{ and } i\leq u \leq p-i+1\}\\
\cup \{(u,p,k)\in \mathcal{D}_P\,\mid \, p>a_i+1 \mbox{ and } u\in\{i,p-i+1\} \}.
\end{array}$$
Note that each $S_{{\frak{A}},i}$ is a chain in $\mathcal{D}_P$ and that $S_{{\frak{A}},i}\cap S_{{\frak{A}},j}=\emptyset$ if $i\neq j$. A $1$-$U$-chain is called a {\it simple $U$-chain}.
\end{definition}

\begin{notation}
If $\frak{A}=\{a_1,a_1+1, \cdots , a_r,a_r+1\}$, then we often denote $U_\frak{A}$ by $U_{a_1, \cdots , a_r}$. 
\end{notation}

%

\begin{figure}\label{shelling}
\begin{tabular}{cc}
\includegraphics[scale=.2]{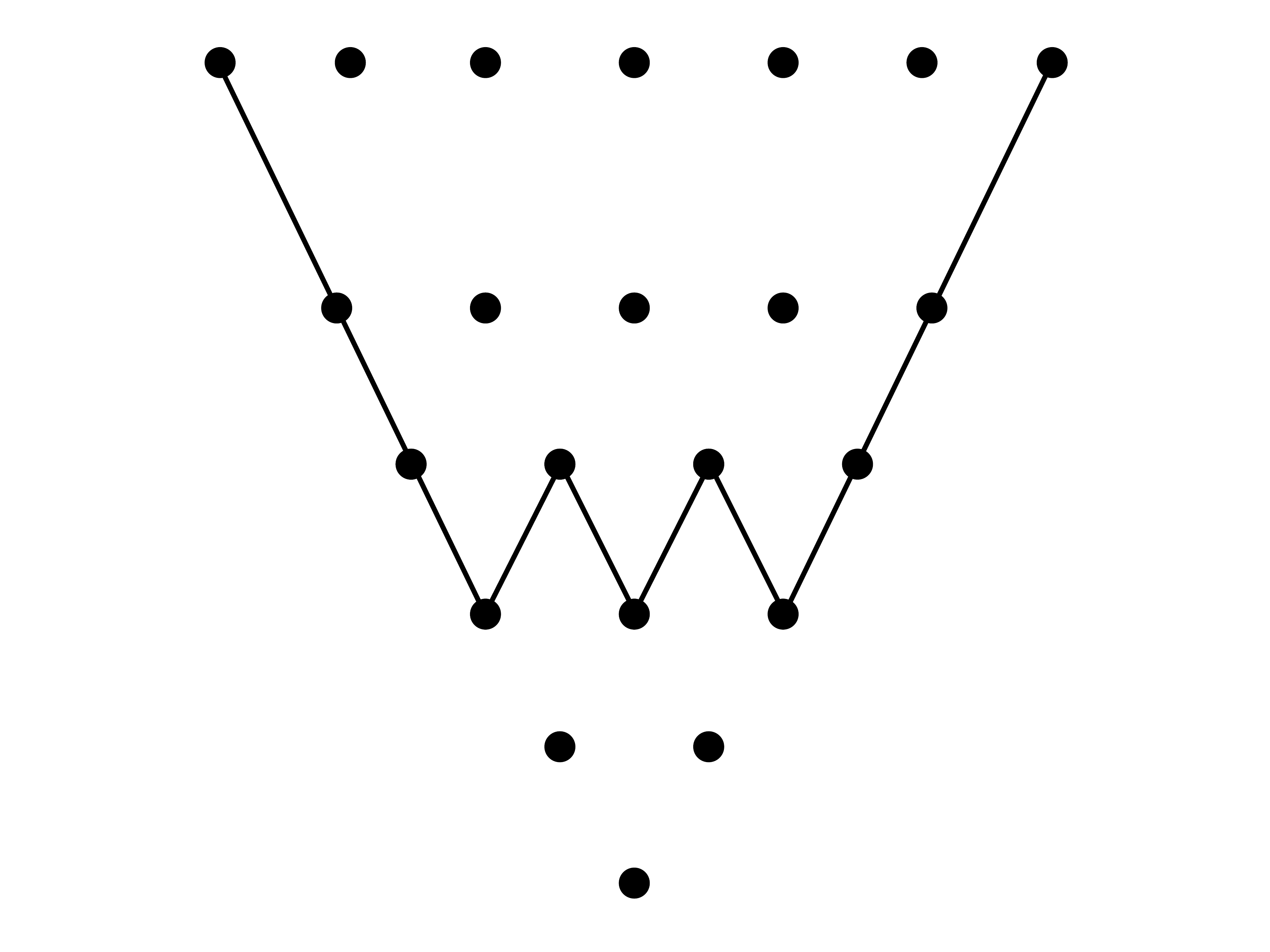}&\includegraphics[scale=.2]{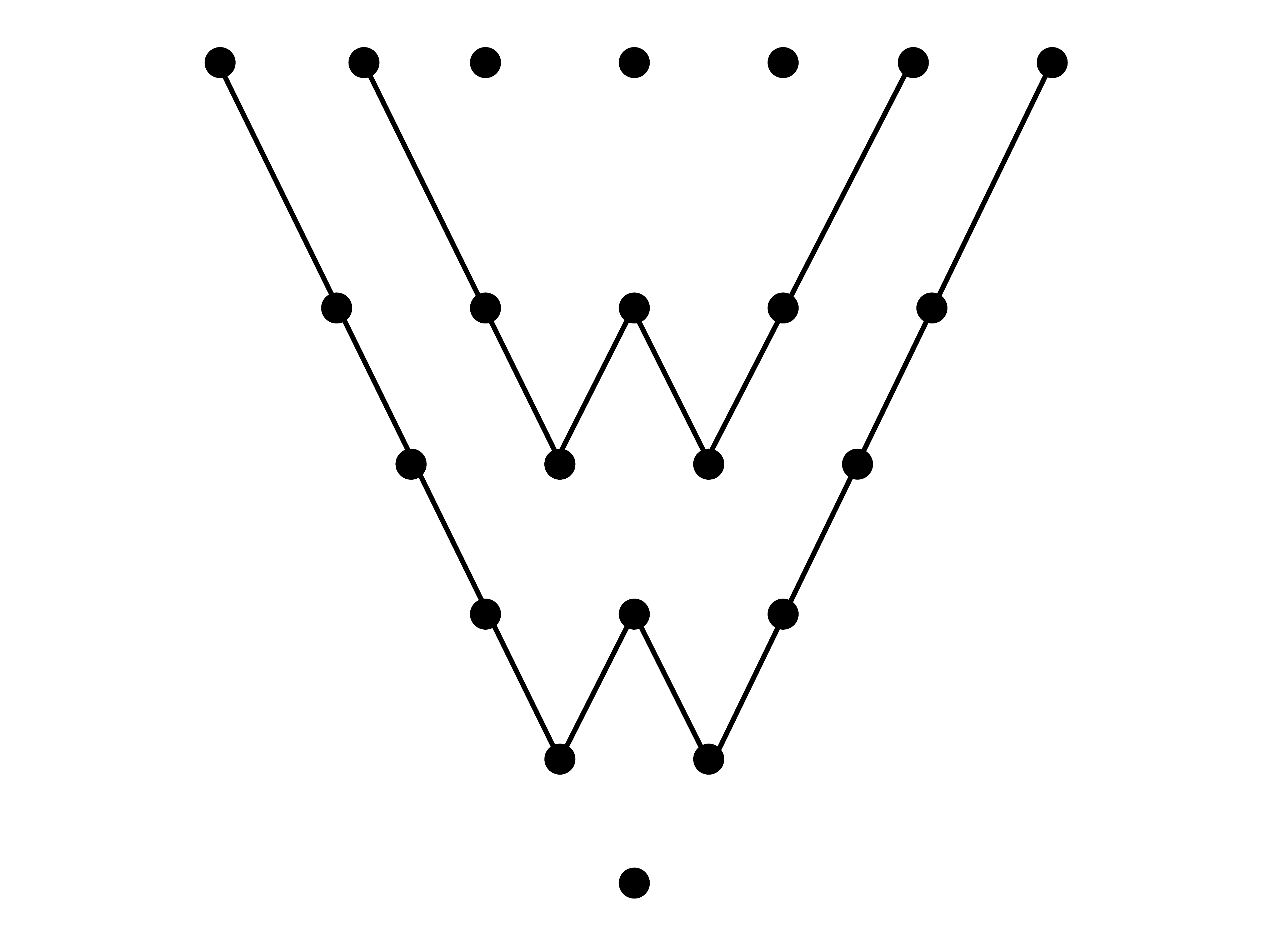}
\end{tabular}
\vspace{-.3in}
\caption{$U$-Chains $U_3$ and $U_{2,4}$ in $\mathcal{D}_P$ with $ P=(7,5,4,3,2,1)$}
\end{figure}

\begin{example}
In Definition \ref{general U chain} above, $\frak{A}$ need not be a subset of $\{p_1, \cdots, p_s\}$. For example, let $P=(7,5,4,3,2,1)$, then $U_6$ is a simple $U$-chain in $\mathcal{D}_P$. We have 
$$U_6=U_{\{6,7\}}=\{(u,7,1)\,\mid \, 1\leq u \leq 7\}.$$

Figure 2 illustrates the simple $U$-chain $U_3=U_{\{3,4\}}$, and the 2-$U$-chain $U_{2,4}=U_{\{2,3,4,5\}}$ in $\mathcal{D_P}$.
%
%
\end{example}

\begin{definition}\label{lambda U}
Let $P$ be a partition of $n$. Define a partition $\lambda_U(P)=(\lambda_{U,1}, \lambda_{U,2}, \cdots)$ of $n$, such that $\lambda_{U,k}=u_{k}-u_{k-1},$ where $u_k$ is the maximum cardinality of a $k$-$U$-chain in $\mathcal{D}_P$.
\end{definition}

We recall the dominance partial order on the set of all partitions of $n$.
\begin{definition}
Let $P=(p_1, p_2, \cdots)$ and  $Q=(q_1, q_2, \cdots)$, with $p_1 \geq  p_2\geq \cdots $ and $ q_1 \geq  q_2\geq \cdots $, be two partitions of $n$. Then $P \leq Q$ if and only if for $k=1, 2, \cdots$, $$\sum_{i=1}^{k}p_i \leq \sum_{i=1}^{k}q_i.$$
\end{definition}

\begin{lemma}\label{lmbdaU less lambda}
For any partition $P$ we have $\lambda_U(P) \leq \lambda(P).$
\end{lemma}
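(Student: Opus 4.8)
The plan is to prove $\lambda_U(P) \le \lambda(P)$ directly from the two definitions by comparing the partial sums $u_k$ and $c_k$. Recall that $\lambda_{U,k} = u_k - u_{k-1}$ where $u_k$ is the maximum cardinality of a $k$-$U$-chain $U_{\frak A}$, and $\lambda_k = c_k - c_{k-1}$ where $c_k$ is the maximum cardinality of a union of $k$ chains in $\mathcal{D}_P$. Since both $\lambda_U(P)$ and $\lambda(P)$ are partitions of the same integer $n$ (note $u_k$ and $c_k$ both stabilize at $n$ for $k$ large), the dominance inequality $\lambda_U(P) \le \lambda(P)$ is equivalent to the family of inequalities $u_k \le c_k$ for all $k \ge 1$. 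So the entire proof reduces to this single observation plus one structural fact about $U$-chains.

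The key step is the observation, already recorded in Definition \ref{general U chain}, that a $k$-$U$-chain $U_{\frak A} = \bigcup_{i=1}^{k} S_{{\frak A};i}$ is by construction a union of $k$ chains $S_{{\frak A};1}, \dots, S_{{\frak A};k}$ of $\mathcal{D}_P$ (pairwise disjoint, though disjointness is not needed here). Consequently $|U_{\frak A}|$ is the cardinality of a union of $k$ chains in $\mathcal{D}_P$, hence by Definition \ref{lambda} is at most $c_k$. Taking the maximum over all admissible index sets $\frak A$ gives $u_k \le c_k$ for every $k \ge 1$.

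From $u_k \le c_k$ for all $k$, together with $u_k = c_k = n$ for all sufficiently large $k$, the dominance relation follows: for any $k$, $\sum_{i=1}^{k} \lambda_{U,i} = u_k \le c_k = \sum_{i=1}^{k} \lambda_i$, which is exactly the definition of $\lambda_U(P) \le \lambda(P)$. One small point to verify is that $u_0 = c_0 = 0$ (the empty union), so that the telescoping of the $\lambda_{U,i}$ and $\lambda_i$ is valid and the partial sums really are $u_k$ and $c_k$ respectively; this is immediate from the conventions in Definitions \ref{lambda} and \ref{lambda U}.

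I do not expect any serious obstacle here: the statement is essentially a formal consequence of the fact that $U$-chains are a restricted subclass of unions of chains, so maximizing over the smaller class cannot beat maximizing over the larger one. The only thing to be careful about is the bookkeeping that converts the partial-sum inequalities into the dominance order and the check that both sequences $(u_k)$ and $(c_k)$ are nondecreasing and eventually equal to $n$ — the latter because $\mathcal{D}_P$ is covered by finitely many chains (e.g.\ its maximal chains), so some union of chains, and in fact some $U$-chain for $k$ large enough, exhausts all of $\mathcal{D}_P$.
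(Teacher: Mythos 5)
Your proof is correct and follows essentially the same route as the paper: the whole content is the observation that a $k$-$U$-chain is by definition a union of $k$ chains $S_{\frak{A};i}$, hence $u_k \leq c_k$, and dominance follows from the partial-sum identities. The paper's proof is just a one-line version of this; your extra bookkeeping (telescoping, $u_0=c_0=0$, stabilization at $n$) is fine but not a different idea.
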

\begin{proof}
By Definition \ref{general U chain} a $k$-$U$-chain $U_\frak{A}$ is the union of $k$ chains $S_{\frak{A},i}$ for $i=1, \cdots, k$. So we always have $c_k \geq u_k$ .
\end{proof}

The following is a preparatory lemma for showing Proposition \ref{U chain}, a key replacement result.

\begin{lemma}\label{chain length lem}
Let $n>1$ and $P=(\ldots, p^{n_p}, \ldots)$ be a partition of $n$. Let  $\frak{A}=\{a_1,a_1+1, \cdots, a_r,a_r+1\}$ and $U_\frak{A}$ be the associated $r$-$U$-chain in $\mathcal{D}_P.$  For each $i\in\{1, \cdots, r\}$, let $\frak{A}_{\hat{i}}=\frak{A}\setminus\{a_i,a_i+1\}$. Then $$\mid U_\frak{A}\mid =\mid U_{\frak{A}_{\hat{i}}}\mid +\mid U_{a_i}\mid -2(i-1)(n_{a_i}+n_{a_i+1})-2\sum_{j=i+1}^r(n_{a_j}+n_{a_j+1}).$$
%
\end{lemma}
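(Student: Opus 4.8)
The plan is to compute $|U_{\frak A}|$ by carefully counting vertices, organizing the count according to the levels $p$ that appear in $\mathcal D_P$ and how each chain $S_{\frak A;j}$ meets level $p$. First I would recall that, by Definition \ref{general U chain}, $U_{\frak A}=\bigcup_{j=1}^r S_{\frak A;j}$ is a disjoint union, so $|U_{\frak A}|=\sum_{j=1}^r |S_{\frak A;j}|$, and similarly $|U_{\frak A_{\hat i}}|=\sum_{j\neq i}|S_{\frak A_{\hat i};j'}|$ where the chains for $\frak A_{\hat i}$ are indexed $1,\dots,r-1$. The key observation is that removing the pair $\{a_i,a_i+1\}$ from $\frak A$ does two things: it deletes the chain that was ``centered'' on levels $a_i,a_i+1$, and it shifts the index of every chain that was centered on a larger level $a_j$ ($j>i$) down by one. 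A chain with center index $\ell$ occupies rows $u=\ell$ and $u=p-\ell+1$ at each level $p$ strictly above its center (contributing $2n_p$ vertices when $\ell<p-\ell+1$, i.e. $2\ell\le p$, and we must track the degenerate middle-level cases), and the full ``almost rectangular block'' of levels $\{a_\ell,a_\ell+1\}$ with $\ell\le u\le p-\ell+1$.

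The heart of the argument is therefore to isolate, for a fixed chain $S_{\frak A;j}$ with $j>i$, the difference between its cardinality as the $j$-th chain of $\frak A$ (center index $j$, centered on $a_j$) and its cardinality as the $(j-1)$-st chain of $\frak A_{\hat i}$ (center index $j-1$, still centered on $a_j$). Both chains live at the same levels $a_j,a_j+1$ and above; the only change is that the ``index parameter'' drops from $j$ to $j-1$. At each level $p\ge a_j+1$ that lies strictly above the center, the chain with parameter $j$ contributes the $2$ rows $u\in\{j,p-j+1\}$ while the one with parameter $j-1$ contributes $u\in\{j-1,p-j+2\}$ — same count, $2n_p$ each, no difference there. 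The difference appears precisely at the two levels $a_j$ and $a_j+1$: with parameter $j$ the constraint is $j\le u\le p-j+1$, giving $(p-2j+2)n_p$ vertices at level $p\in\{a_j,a_j+1\}$, whereas with parameter $j-1$ it is $(p-2j+4)n_p$; the difference is $2n_{a_j}+2n_{a_j+1}$ per chain. Summing over $j=i+1,\dots,r$ produces the term $-2\sum_{j=i+1}^r(n_{a_j}+n_{a_j+1})$ — the sign being negative because these chains are shorter in $U_{\frak A}$ than the corresponding ones in $U_{\frak A_{\hat i}}$, wait: re-examining, the parameter is larger in $\frak A$, so the block is narrower, so indeed $|S_{\frak A;j}| = |S_{\frak A_{\hat i};j-1}| - 2(n_{a_j}+n_{a_j+1})$, consistent with the stated formula.

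Next I would treat the deleted chain $S_{\frak A;i}$ itself, comparing it with $U_{a_i}=S_{\{a_i,a_i+1\};1}$, the simple $U$-chain with parameter $1$. These occupy the same levels $a_i,a_i+1$ and above, but $S_{\frak A;i}$ has parameter $i$ while $U_{a_i}$ has parameter $1$. The same level-by-level bookkeeping as above, now applied across \emph{all} levels from $a_i$ upward (not just the top two), yields $|S_{\frak A;i}| = |U_{a_i}| - 2(i-1)\sum_{p}n_p$ where the sum runs over every level $p\ge a_i$ occurring in $P$; but the contribution from levels $p> a_i+1$ strictly above cancels (parameter shift of a ``two-row'' slice), so only the levels $a_i$ and $a_i+1$ survive, giving $|S_{\frak A;i}|=|U_{a_i}|-2(i-1)(n_{a_i}+n_{a_i+1})$. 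Assembling the three pieces:
\[
|U_{\frak A}| = \sum_{j<i}|S_{\frak A;j}| + |S_{\frak A;i}| + \sum_{j>i}|S_{\frak A;j}|,
\]
where $\sum_{j<i}|S_{\frak A;j}|$ is unchanged in passing to $\frak A_{\hat i}$, $|S_{\frak A;i}|$ contributes $|U_{a_i}|-2(i-1)(n_{a_i}+n_{a_i+1})$, and $\sum_{j>i}|S_{\frak A;j}| = \sum_{j>i}|S_{\frak A_{\hat i};j-1}| - 2\sum_{j=i+1}^r(n_{a_j}+n_{a_j+1})$; since $\sum_{j<i}|S_{\frak A;j}| + \sum_{j>i}|S_{\frak A_{\hat i};j-1}| = |U_{\frak A_{\hat i}}|$, the claimed identity follows.

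The main obstacle I anticipate is the degenerate ``middle level'' bookkeeping: when $p$ is even and $u$ ranges over $\{p/2, p/2+1\}$ (or when a level lies between $a_j$-type centers so that the lower index $p-j+1$ and the upper index $j$ coincide or cross), the ``two distinct rows'' picture collapses, and one has to check that these degeneracies are benign because the relevant chains $S_{\frak A;j}$ with $j>i$ are, by hypothesis $a_1<a_1+1<\cdots<a_r<a_r+1$ and the requirement that $U_{\frak A}$ be a genuine $r$-$U$-chain, always ``wide enough'' at levels $a_j,a_j+1$ that the slice $j\le u\le p-j+1$ is nonempty — i.e. $2j\le a_j+1$. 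I would state this as a preliminary remark (or fold it into the verification that each $S_{\frak A;j}$ is indeed a chain, as already asserted in Definition \ref{general U chain}) and then the level-by-level difference computation goes through cleanly.
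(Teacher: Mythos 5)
Your proposal is correct and is essentially the paper's argument: both are direct vertex counts from Definition \ref{general U chain}, using $|U_{a_i}|=a_i n_{a_i}+(a_i+1)n_{a_i+1}+2\sum_{p>a_i+1}n_p$ and the fact that shifting the chain index by one only changes the count at the two central levels (the degeneracy you worry about is ruled out exactly as in the paper's Equation \ref{bu}, since $a_j>2(j-1)$). The only cosmetic difference is organization: you compare the chains $S_{\frak{A};j}$ and $S_{\frak{A}_{\hat i};j-1}$ pairwise, while the paper computes the set difference $U_{\frak{A}}\setminus U_{\frak{A}_{\hat i}}$ in one step.
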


\begin{proof}

By Definition \ref{general U chain}, $U_{\frak{A}_{\hat{i}}}\subseteq U_\frak{A}$ and 
$$\begin{array}{ll}
U_\frak{A} \setminus U_{\frak{A}_{\hat{i}}}&=\{(u,p,k)\in \mathcal{D}_P\,\mid \, p\in\{a_i, a_i+1\} \mbox{ and } i\leq u \leq p-i+1\}\\
&\cup_{j=i}^r \{(u,p,k)\in \mathcal{D}_P\,\mid \, a_j+1<p<a_{j+1} \mbox{ and } u\in\{j,p-j+1\} \}.
\end{array}$$

%
%
%
%
%

Therefore 
$$\begin{array}{ll}
\mid U_\frak{A}\mid -\mid U_{\frak{A}_{\hat{i}}}\mid &= (a_i-2i+2)n_{a_i}+(a_i-2i+3)n_{a_i+1}\\
&+2\displaystyle{\sum_{p>a_i+1}n_p\, \, -2\sum_{j=i+1}^r(n_{a_j}+n_{a_j+1})}.
\end{array}$$

To complete the proof, it is enough to use Definition \ref{general U chain} again to get $$\mid U_{a_i}\mid =\mid U_{\{a_i,a_i+1\}}\mid =a_i\,n_{a_i}+(a_i+1)n_{a_i+1}+2\displaystyle{\sum_{p>a_i+1}n_p}.$$
%
\end{proof}
%
%

\begin{proposition}\label{U chain}
Let $P=(\ldots, p^{n_p}, \ldots)$ be a partition of $n>1$ and suppose that $U_a$ is a maximum simple $U$-chain in $\mathcal{D}_P$. If $U_{b_1, \cdots, b_r}$ is an $r$-$U$-chain in $\mathcal{D}_P$, then there exists $1\leq u \leq r$ such that $b_{u-1}<a<b_{u+1}-1$ and $\mid U_{b_1, \cdots, b_r}\mid  \leq  \mid U_{b_1,\cdots, b_{u-1}, a, b_{u+1}, \cdots, b_r}\mid .$
\end{proposition}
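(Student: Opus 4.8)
The plan is to compare the two $r$-$U$-chains slot by slot and then estimate the single slot that changes. Suppose $u$ is an index with $b_{u-1}<a<b_{u+1}-1$ for which $U_{b_1,\dots,b_{u-1},a,b_{u+1},\dots,b_r}$ is a genuine $r$-$U$-chain; then $a$ fills its $u$-th slot and this chain agrees with $U_{b_1,\dots,b_r}$ in every other slot. Applying Lemma \ref{chain length lem} with $i=u$ to each of the two chains and subtracting --- the common summands $|U_{\mathfrak{A}_{\hat u}}|$ and $2\sum_{j>u}(n_{b_j}+n_{b_j+1})$ cancel --- gives
$$D_u=\bigl(|U_a|-|U_{b_u}|\bigr)-2(u-1)\bigl(m_a-m_{b_u}\bigr),$$
where $D_u:=|U_{b_1,\dots,b_{u-1},a,b_{u+1},\dots,b_r}|-|U_{b_1,\dots,b_r}|$ and $m_c:=n_c+n_{c+1}$. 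It then suffices to produce one such $u$ with $D_u\ge0$.

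I would next rewrite $D_u$ by telescoping. From the formula $|U_c|=c\,n_c+(c+1)n_{c+1}+2\sum_{p>c+1}n_p$ obtained inside the proof of Lemma \ref{chain length lem} one reads off $|U_c|-|U_{c+1}|=c\,g_c$ and $m_c-m_{c+1}=g_c$, where $g_c:=n_c-n_{c+2}$. Hence, whenever $b_u<a$,
$$D_u=-\sum_{c=b_u}^{a-1}(c-2u+2)\,g_c,$$
while the hypothesis that $U_a$ is a \emph{maximum} simple $U$-chain says exactly that $R(c_0):=\sum_{c=c_0}^{a-1}c\,g_c=|U_{c_0}|-|U_a|\le0$ for every integer $c_0$ with $1\le c_0\le a$ (and $R(a)=0$).

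Now I choose $u$ by the position of $a$. Put $v:=\#\{j:b_j<a\}$. If $a=b_j$ for some $j$, take $u=j$: nothing changes and $D_u=0$. If $v=0$ (i.e.\ $a<b_1$), take $u=1$, so that $D_1=|U_a|-|U_{b_1}|\ge0$ by maximality. Otherwise $b_v<a<b_{v+1}$, and provided $a\le b_{v+1}-2$ I take $u=v$ (the slot just below $a$), which is the essential case, treated below. In the remaining possibility $a=b_{v+1}-1$ one has $b_{v+1}=a+1$, the telescoping formula collapses to the single term $D_{v+1}=(a-2v)\,g_a$, and this is $\ge0$ because maximality (comparing $U_a$ with $U_{a+1}$) forces $g_a\ge0$ while the gap condition $b_{j+1}\ge b_j+2$ forces $b_v\ge2v-1$, hence $a>b_v$ gives $a\ge2v$; the (very few) remaining boundary configurations are checked directly.

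It remains to handle $u=v$ with $b_v<a\le b_{v+1}-2$, i.e.\ to show $\sum_{c=b_v}^{a-1}(c-2v+2)g_c\le0$. Write $c-2v+2=\psi_c\,c$ with $\psi_c:=1-\tfrac{2v-2}{c}$; since $b_v\ge 2v-1$ we have $\psi_c>0$ for all $c\ge b_v$, and $\psi_c$ is non-decreasing in $c$ because $v\ge1$. Summation by parts against the partial sums $R(\cdot)$ then yields
$$\sum_{c=b_v}^{a-1}(c-2v+2)g_c=\sum_{c=b_v}^{a-1}\psi_c\bigl(R(c)-R(c+1)\bigr)=\psi_{b_v}R(b_v)+\sum_{c=b_v+1}^{a-1}(\psi_c-\psi_{c-1})\,R(c),$$
using $R(a)=0$. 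Each summand on the right is a non-negative number ($\psi_{b_v}$, respectively $\psi_c-\psi_{c-1}\ge0$) times a non-positive value of $R$, hence is $\le0$; so the whole sum is $\le0$ and $D_v\ge0$, as required. The step I expect to be most delicate is the choice of the slot $u$: one must arrange simultaneously that $b_{u-1}<a<b_{u+1}-1$ and that the index intervals $\{c,c+1\}$ of the modified sequence stay pairwise disjoint, which is what forces the little boundary analysis; once $u$ is fixed, the heart of the argument is the summation by parts, where maximality of $U_a$ --- which makes \emph{every} partial sum $R(c_0)$ non-positive --- together with the monotonicity of the weights $\psi_c$, is precisely what delivers $D_u\ge0$.
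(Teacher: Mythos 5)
Your proof is correct and follows essentially the same route as the paper: the same difference formula extracted from Lemma \ref{chain length lem}, the same case split on the position of $a$ among the $b_j$'s, and in the main case your summation by parts expresses $D_v$ as a non-negative combination of the maximality defects $|U_a|-|U_c|$ with exactly the coefficients $\psi_{b_v}=1-\tfrac{2(v-1)}{b_v}$ and $\psi_c-\psi_{c-1}=\tfrac{2(v-1)}{c(c-1)}$ that the paper obtains by solving its triangular linear system. The one boundary configuration you defer ($b_v=a-1$ and $b_{v+1}=a+1$, where the modified tuple is not a genuine $U$-chain) is precisely the situation $\{a,a+1\}\subset\{b_1,b_1+1,\dots,b_r,b_r+1\}$ that the paper itself sets aside at the outset with ``nothing to prove.''
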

In other words, in $U_{b_1, \cdots b_r}$ we can replace {\it some} $b_u$ by $a$ and get a $U$-chain which has at least the same cardinality.

\begin{proof}
First note that by Definition \ref{general U chain}, there is nothing to prove if $\{a, a+1\} \subset \{b_1, b_1+1, \cdots, b_r, b_r+1\}$. So we assume that $\{a, a+1\} \not \subset \{b_1, b_1+1, \cdots, b_r, b_r+1\}$. Also note that since $b_1>0$, for any $u\in\{1, \cdots, r\},$ \begin{equation}\label{bu}b_u>2(u-1)\end{equation}

\noindent{\bf Case 1.} If $a\leq b_1$, then by Lemma \ref{chain length lem}, 
$$\begin{array}{ll}
\mid U_{b_1, \cdots, b_r}\mid &=\mid U_{b_2, \cdots, b_r}\mid +\mid U_{b_1}\mid -2\displaystyle{\sum_{i=2}^r(n_{b_i}+n_{b_i+1})}, \mbox{ and}\\
\mid U_{a, b_2, \cdots, b_r}\mid &=\mid U_{b_2, \cdots, b_r}\mid +\mid U_{a}\mid -2\displaystyle{\sum_{i=2}^r(n_{b_i}+n_{b_i+1})}.
\end{array}$$
Therefore $\mid U_{a, b_2, \cdots, b_r}\mid -\mid U_{b_1, \cdots, b_r}\mid =\mid U_a\mid -\mid U_{b_1}\mid  \geq 0,$ by the maximality of $\mid U_a\mid $.
\\

\noindent{\bf Case 2.} If $b_u < a < b_{u+1}$ for some $u\in \{1, \cdots, r\}$. (We set $b_{r+1}=\infty$.)

{\bf Case 2.1.} If $b_{u+1}=a+1$. 

Then $b_u<a<b_{u+1}<b_{u+2}-1$.  Since $U_a$ is a maximum simple $U$-chain, we also have
$$0\leq \mid U_a\mid -\mid U_{b_{u+1}}\mid =\mid U_a\mid -\mid U_{a+1}\mid =a\,(n_{a}-n_{a+2}).$$ 
On the other hand, by Lemma \ref{chain length lem},
$$\begin{array}{rl}
\mid U_{b_1, \cdots, b_r}\mid &=\mid U_{b_1, \cdots, b_{u},b_{u+2}, \cdots, b_r}\mid +\mid U_{b_{u+1}}\mid \\&-2u\,(n_{b_{u+1}}+n_{b_{u+1}+1})-2\displaystyle{\sum_{i=u+2}^r(n_{b_i}+n_{b_i+1})}, \mbox{ and}\\
\mid U_{b_1, \cdots,b_{u},a,b_{u+2},\cdots,  b_r}\mid &=\mid U_{b_1, \cdots, b_{u},b_{u+2}, \cdots, b_r}\mid +\mid U_a\mid \\&-2u\,(n_a+n_{{a}+1})-2\displaystyle{\sum_{i=u+2}^r(n_{b_i}+n_{b_i+1})}.
\end{array}$$
Since by Equation \ref{bu}, $a\geq 2u$, we then get
$$\begin{array}{ll}
\mid U_{b_1, \cdots,b_{u},a,b_{u+2},\cdots,  b_r}\mid -\mid U_{b_1, \cdots, b_r}\mid &=\mid U_a\mid -\mid U_{b_{u+1}}\mid -2u\,(n_a+n_{a+1}-n_{b_{u+1}}-n_{b_{u+1}+1})\\&=(a-2u)(n_{a}-n_{a+2})\\&=(\frac{a-2u}{a})(\mid U_a\mid -\mid U_{b_{u+1}}\mid )\geq 0.
\end{array}$$
\medskip
{\bf Case 2.2.} If $b_u+1=a<b_{u+1}-1$. 

Then by maximality of $\mid U_a\mid $, we have $$0\leq \mid U_a\mid -\mid U_{b_{u}}\mid =\mid U_a\mid -\mid U_{a-1}\mid =(a-1)\,(n_{a+1}-n_{a-1}).$$

By Lemma \ref{chain length lem}, 
$$\begin{array}{ll}
\mid U_{b_1, \cdots,b_{u-1},a,b_{u+1},\cdots,  b_r}\mid -\mid U_{b_1, \cdots, b_r}\mid &=\mid U_a\mid -\mid U_{b_{u}}\mid -2(u-1)(n_{a+1}+n_{a-1}-n_{b_{u}+1}-n_{b_{u+1}})\\&=(a-2u+1)(n_{a+1}-n_{a-1})\\&=(\frac{a-2u+1}{a-1})(\mid U_a\mid -\mid U_{b_{u}}\mid ).
\end{array}$$

By Equation \ref{bu}, we have $a\geq 2u-1$, and therefore $(\frac{a-2u+1}{a-1})(\mid U_a\mid -\mid U_{b_{u}}\mid )\geq 0,$ as desired.
\bigskip 

{\bf Case 2.3.} If $b_u+1 < a < b_{u+1}-1$. 

Let $b=b_u$, and $\Delta=\mid U_{b_1, \cdots,b_{u-1},a,b_{u+1}, b_r}\mid -\mid U_{b_1, \cdots, b_r}\mid $. By Lemma \ref{chain length lem}, we have 
$$\begin{array}{ll}
\Delta&=\mid U_a\mid -\mid U_{b}\mid -2(u-1)(n_{a+1}+n_{a}-n_{b+1}-n_{b})\\
&=(a-2u+1)n_{a+1}+(a-2u)n_{a}-2\displaystyle{\sum_{p=b+2}^{a-1}n_p}-(b-2u+3)n_{b+1}-(b-2u+2)n_{b}.
\end{array}$$
We will prove that $\Delta \geq 0.$ 

For $c\in\{b, \cdots, a-1\}$, define $\delta_{c}=\mid U_a\mid -\mid U_c\mid $. By the maximality of $\mid U_a\mid $, we have $\delta_c \geq 0$ for all $c$. We also have $\delta_{a-1}=(a-1)n_{a+1}-(a-1) n_{a-1},$ and if $b\leq c<a-1$, then 
$$\delta_c= (a-1)n_{a+1}+(a-2)n_a-2(\displaystyle{\sum_{p=c+2}^{a-1}n_p}) -(c+1)n_{c+1}-c\,n_{c}.$$

We define an $(a-b+2) \times (a-b)$ matrix $M$ such that for all $1\leq j\leq a-b$, $$\delta_{a-j}=\displaystyle{\sum_{i=1}^{a-b+2}M_{ij}(n_{a-i+2})}.$$ So we have 
$$M=\left( \begin{array}{cccccc}
a-1    & a-1    & a-1 & \cdots& a-1   \\
0        & a-2    & a-2& \cdots & a-2   \\
-(a-1)& -(a-1)& -2   & \cdots  &-2     \\
     0   & -(a-2)& -(a-2) & \cdots & -2   \\
 \vdots &   \vdots & \vdots & \vdots & \vdots   \\
     0   &      \cdots &    0 & -(b+1) & -(b+1)    \\
     0   &      \cdots  &     0 & 0  & -b     
\end{array} \right). $$

Let $$R=\left( \begin{array}{c} r_{1}  \\ \vdots  \\  r_{a-b} \end{array} \right) \mbox{ and } D=  \left( \begin{array}{c}  a-2u+1\\  a-2u \\ -2 \\ \vdots \\   -2  \\  -(b-2u+3) \\ -(b-2u+2) \end{array} \right).$$ Note that $D$ is defined such that $$\Delta =\sum_{i=1}^{a-b+2} D_i  \cdot (n_{a-i+2}).$$

We will show that the linear system $M\cdot R=D$ of linear equations, has a unique non-negative solution. This implies $\Delta=\displaystyle{\sum_{i=1}^{a-b}r_i\, \,\delta_{a-i}}$, with $r_i \geq 0$ for all $i$, which proves the desired inequality $\Delta \geq 0$.

Let $M=M(1)$ and $D=D(1)$, and for $k=1, \cdots,[\frac{a-b}{2}]+1 $, let $M(k+1)$ (respectively $D(k+1)$) denote the matrix obtained by adding the $(2k-1)$-st row of $M(k)$ (respectively $D(k)$) to its $(2k+1)$-st row and adding the $2k$-th row of 
$M(k)$ (respectively $D(k)$) to its $(2k+2)$-nd row. Then for all $k$, the linear system $M(k) \cdot R=D(k)$ of linear equation is equivalent to the linear system $M \cdot R=D$. For $\ell=[\frac{a-b}{2}]+1$, we have  

$$M(\ell)=\left( \begin{array}{cccccc}
a-1    & a-1    & a-1 & a-1 & \cdots & a-1 \\
0        & a-2    & a-2 & a-2 & \cdots & a-2 \\
0       & 0      & a-3    &a-3     & \cdots & a-3 \\
 \vdots &   \vdots & \vdots & \vdots & \vdots & \vdots \\
      0   & \cdots & 0 & b+1 & \cdots & b+1 \\

     0   &      \cdots &    0 & 0 & 0 & b \\
     0   &      \cdots  &     \cdots  & 0     & 0 & 0 \\
  0   &      \cdots  &    \cdots  &  \cdots  & 0     & 0
\end{array} \right), \mbox{ and }
D(\ell)=\left( \begin{array}{c} a-2u+1 \\  a-2u \\ a-2u-1 \\  \vdots \\  b-2u+3 \\   b-2u+2  \\  0 \\ 0 \end{array} \right).$$

Therefore, to prove the claim it is enough to prove that the the following linear system of $a-b$ equations in $a-b$ variables has a non-negative solution.

$$\left( \begin{array}{cccccc}
1   & 1    & 1 & \cdots & 1 \\ 
0        & 1    & 1  & \cdots & 1 \\ 
0       & 0      & 1        & \cdots & 1 \\ 
 \vdots &   \vdots  & \vdots & \vdots & \vdots \\ 
     0   &      \cdots &    0 & 0 & 1 \\

\end{array} \right)
 \cdot R = \left( \begin{array}{c}1-\frac{2(u-1)}{a-1}  \\ 1-\frac{2(u-1)}{a-2}  \\\vdots \\   1-\frac{2(u-1)}{b+1} \\ 1-\frac{2(u-1)}{b}  \end{array} \right).$$

This system has the following unique solution:
$$
r_{a-b}=1-\displaystyle{\frac{2(u-1)}{b}},\mbox{ and for } 1\leq i < a-b,\, \,
r_{i}=\displaystyle{\frac{2(u-1)}{(a-i)(a-i-1)}}.
$$

Note that by Equation \ref{bu}, $r_{a-b}>0$. Clearly, $r_i \geq 0$ for $1\leq i < a-b$, as well. This completes the proof of the proposition.
\end{proof}

\begin{remark}
As the proof of Proposition \ref{U chain} shows, the proposition holds if the cardinality of $U_a$ is greater than or equal to the cardinality of $U_c$ for all $b_1 \leq c \leq b_r+1$.
\end{remark}

\begin{example}
Let $P=(6^2,5,4,3,2^2,1^2)$. We will examine Proposition \ref{U chain} for the $2$-$U$-chain $U_{1,3}$ in $\mathcal{D}_P$. First note that $U_5$ with cardinality 17 is the only maximum simple $U$-chain in $\mathcal{D}_P$. Replacing $U_3$ with $U_5$ in $U_{1,3}$, we can obtain a larger 2-$U$-chain, as we have $27=\mid U_{1,5}\mid >\mid U_{1,3}\mid =25$. Also note that the proposition is an existence result and is not necessarily true for all $u$. In fact in this example we have $24=\mid U_{3,5}\mid <\mid U_{1,3}\mid =25$.  

%
%
%
%
%
%
%

\end{example}

\section{Uniqueness of Oblak Partitions}\label{O}

In this section we discuss a recursive process, which was originally defined by P. Oblak and later generalized in \cite{BIK}. A generalized Oblak process, or a $U$-process, is a recursive process defined by finding a maximum simple $U$-chain in the poset corresponding to a partition, then obtain a new partition by removing the elements of this simple chain from the poset, and then repeat the same process.

Let $P=(p_s^{n_s}, \cdots, p_1^{n_1})$ be a partition of $n$. Suppose that $a$ is a positive integer and consider the simple $U$-chain $C=U_a=U_{\{a,a+1\}}$ in $\mathcal{D}_P$. Let $P'$ be the partition corresponding to the vertices of $\mathcal{D}_{P} \setminus C$. Namely $P'=(q_s^{m_s}, \cdots, q_1^{m_1})$,  such that  $$q_i=\left\{\begin{array}{lll}p_i&&\mbox{if }p_i<a\\ p_i-2&&\mbox{if }p_i>a+1\end{array}\right.; \mbox{ and }m_i=\left\{\begin{array}{lll}n_i&&\mbox{if }p_i\not \in \{a,a+1\}\\ 0&&\mbox{if }p_i\in\{a,a+1\}\end{array}\right..$$

Then there is a natural ``relabeling" map of sets $\iota:\mathcal{D}_{P'}\to \mathcal{D}_{P}$ defined as follows.
\begin{equation}\label{Oblak pi}
\iota(\, (u, p, k)\, )=\left\{ \begin{array}{lll} 
(u,p,k) && \mbox{ if } p < a, \\
 (u+1, p+2, k )&&\mbox{ if } p\geq a. \\
\end{array}
\right.
\end{equation}

%

\begin{definition}\label{Oblak defn}

A {\it $U$-process} for $P$ is a succession $\frak{C}=(C_1, \cdots, C_m)$ of subsets of $\mathcal{D}_P$ defined recursively as follows.
\begin{itemize}
\item $P_1=P$ and $\iota_1$ is the identity map.  
\item $C^\dagger_i$ is a maximum simple $U$-chain in $\mathcal{D}_{P_i}$ and $C_i=\iota_1\cdots\iota_i(C^\dagger_i)$.
\item $P_{i+1}$ is the partition obtained from the diagram of $\mathcal{D}_{P_i}$ after removing $C_i^\dagger$, and $\iota_{i+1}:\mathcal{D}_{P_{i+1}} \to \mathcal{D}_{P_{i}}$, is defined as in Equation \ref{Oblak pi} above.
\end{itemize}

A $U$-process $\frak{C}=(C_1, \cdots, C_r)$ is called {\it full} if $C_1 \cup \cdots \cup C_r=\mathcal{D}_P.$ To each full $U$-process $\frak{C}$, we assign a partition $Q_{\frak{C}}(P)=(\mid C_1\mid , \cdots, \mid C_r\mid )$ of $n$.

\end{definition}

\begin{remark}\label{Oblak min remark}
Assume that $\frak{C}=(C_1, \cdots, C_r)$ is a full $U$-process for $P$. By definition,  $C_r$ is the pullback of the vertices of a maximum simple $U$-chain of $\mathcal{D}_{P_{r}}$ into $\mathcal{D}_P$. Since $\frak{C}$ is full, $\mathcal{D}_{P_{r}}$ must be a simple $U$-chain. Thus $P_{r}$ is an almost rectangular partition. 
\end{remark}

A given partition $P$ may admit several full $U$-processes, as the following example shows. In \cite{Oblak}, P. Oblak picks a particular $U$-process, choosing the maximum chain above all others in the diagram of $\mathcal{D}_P$ at each step, and conjectures that the corresponding partition is the same as $Q(P)$ (see \cite{BKO}). 

\begin{example}\label{new bridge}
(See Figure 3.) Let $P=(5,4,3^2,2,1)$. Then both $C_1=U_3$ and $D_1=U_2$ are maximum simple $U$-chains in $\mathcal{D}_P$ (both of length 12). So one can begin a $U$-process with either one of those. We have $\mathcal{D}_P \setminus C_1=\{(2,5,1),(3,5,1),(4,5,1),(1,2,1),(2,2,1),(1,1,1)\} $, and therefore the corresponding partition $P_2=(3,2,1)$. Again at this point, we have two choices for a maximum simple $U$-chain, namely $U_2$ or $U_1$ which correspond to  
$$\begin{array}{l}
C_2=\{(2,5,1),(1,2,1),(3,5,1),(2,2,1),(4,5,1)\},\mbox{  and} \\
C'_2=\{(2,5,1),(1,2,1),(1,1,1),(2,2,1),(4,5,1)\},
\end{array}$$ in $\mathcal{D}_P$, respectively. These choices give rise to two different full $U$-processes 
$\frak{C}=(C_1, C_2, C_3)$, and 
$\frak{C}'=(C_1,C'_2,C'_3)$, where $C_3=\{(1,1,1)\}$ and $C'_3=\{(3,5,1)\}$.

On the other hand, starting a $U$-process with $D_1$, we can get two other full $U$-processes 
$\frak{D}=(D_1,D_2,D_3)$ and $\frak{D}'=(D_1,D'_2,D'_3)$, where 
$$\begin{array}{l}
D_2=\{(2,5,1),(2,4,1),(3,5,1),(3,4,1),(4,5,1)\},\\
D_3=\{(1,1,1)\},\\
D'_2=\{(2,5,1),(2,4,1),(1,1,1),(3,4,1),(4,5,1)\},\\
D'_3=\{(3,5,1)\}.
\end{array}$$
\end{example}

\begin{figure}\label{New Oblak figure}
\begin{center}
\vspace{-.3in}
\includegraphics[scale=.5]{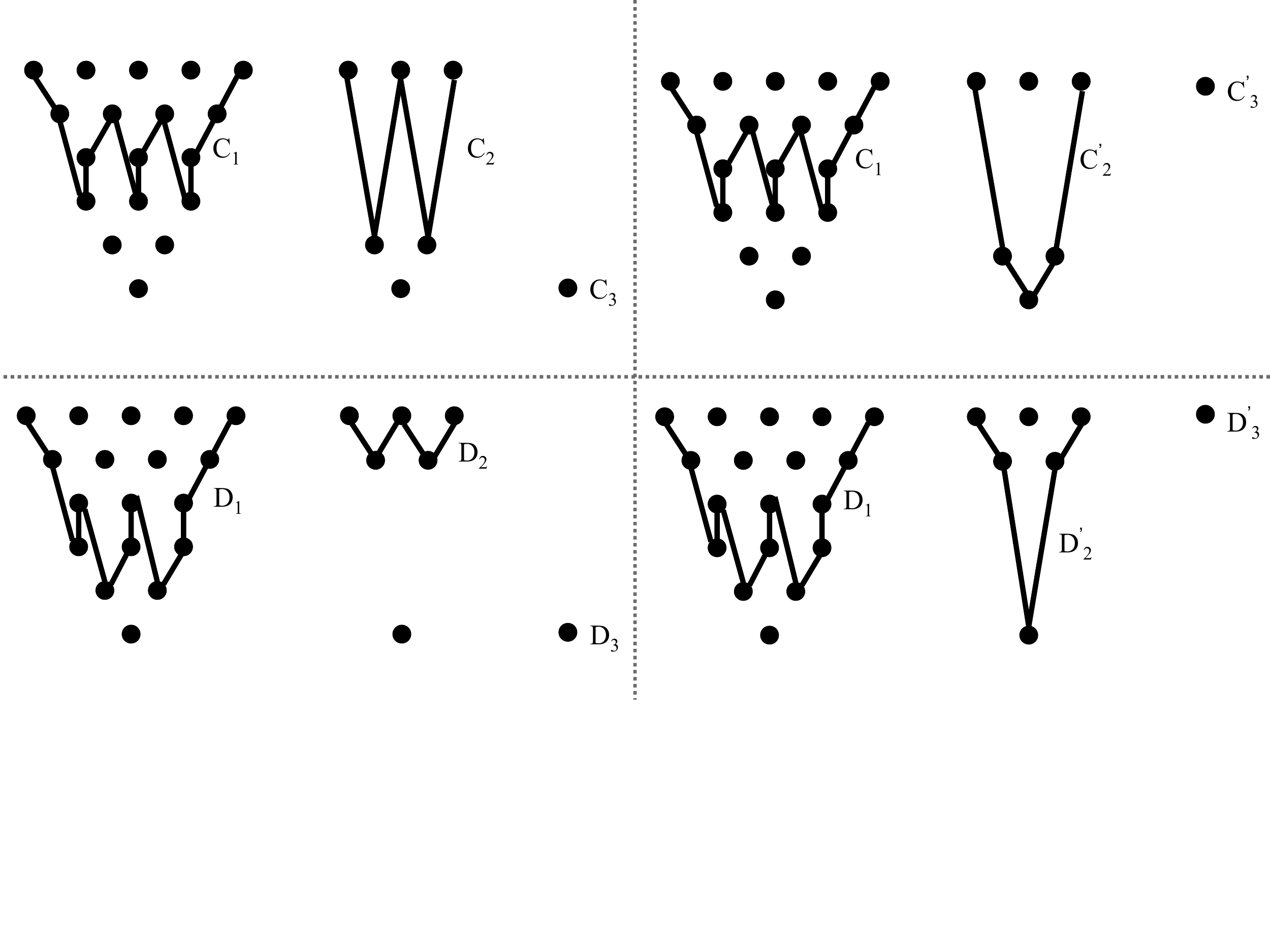}
\end{center}
\vspace{-.8in}
\caption{Different full $U$-processes for $P=(5,4,3^2,2,1)$}

\end{figure}


Although the partition $P$ above admits four different full $U$-processes, the corresponding partitions are all the same, namely $(12,5,1)$.  In Theorem \ref{unique Oblak}, we prove that every full $U$-process give rise to the same partition of $n$, which is in fact equal to the partition $\lambda_U(P)$ introduced in Definition \ref{lambda U}.

It is also worth noting that in general, for a $U$-process $\frak{C}=(C_1, \cdots, C_r)$, the $C_k$'s may not be chains in $\mathcal{D}_P$, since $\mathcal{D}_{P_k}$ is not necessarily a sub poset of $\mathcal{D}_P$. This is easy to observe in Example \ref{new bridge}. For example, both $(2,5,1)$ and $(1,2,1)$ belong to $C_2$ but they are not comparable in $\mathcal{D}_P$. On the other hand, $C_1 \cup C_2$ is a union of two chains in $\mathcal{D}_P$. In fact,  $C_1 \cup C_2=U_{2,4}$ is a 2-$U$-chain. The following proposition shows that this is the case at any given step of a $U$-process. This is also stated without a proof in \cite[Proposition 3.18]{BIK}.

%
%
%
%
%

\begin{proposition} \label{BIK shelling}
Let $P$ be a partition and $\frak{C}=(C_1, \cdots, C_r)$ a $U$-process for $P$. There exists an $r$-$U$-chain $U_{b_1, \cdots,b_r}$ in $\mathcal{D}_P$ such that $C_1 \cup \cdots \cup C_r=U_{b_1, \cdots,b_r}$, as sets.
\end{proposition}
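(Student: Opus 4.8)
The plan is to induct on the number of steps $r$ of the $U$-process, treating the passage from $\mathcal{D}_{P_1}$ to $\mathcal{D}_{P_2}$ as the base of a reduction. The first step $C_1 = C_1^\dagger = U_a$ is by definition a simple $U$-chain in $\mathcal{D}_P$, so the case $r=1$ is immediate. For the inductive step, apply the induction hypothesis to the $U$-process $(C_2,\dots,C_r)$ for the partition $P_2$: there is an $(r-1)$-$U$-chain $U_{c_1,\dots,c_{r-1}}$ in $\mathcal{D}_{P_2}$ with $C_2^\dagger \cup \dots \cup C_r^\dagger = U_{c_1,\dots,c_{r-1}}$ as subsets of $\mathcal{D}_{P_2}$. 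Then $C_2 \cup \dots \cup C_r = \iota_2(U_{c_1,\dots,c_{r-1}})$, where $\iota_2 : \mathcal{D}_{P_2} \to \mathcal{D}_P$ is the relabeling map from Equation \ref{Oblak pi} (I write $\iota_2$ for what the definition calls $\iota_1\cdots\iota_2$, since $\iota_1$ is the identity). So the whole union is $U_a \cup \iota_2(U_{c_1,\dots,c_{r-1}})$, and the task reduces to identifying this set with an $r$-$U$-chain in $\mathcal{D}_P$.

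The core of the argument is therefore a single explicit computation: understanding how the relabeling map $\iota_2$ transports a $U$-chain of $\mathcal{D}_{P_2}$ back into $\mathcal{D}_P$, and checking that $U_a$ together with the image fit together exactly as the $S_{\mathfrak{A};i}$ pieces of Definition \ref{general U chain}. Concretely: $\iota_2$ fixes vertices in levels $p<a$ and sends $(u,p,k) \mapsto (u+1, p+2, k)$ for $p \geq a$. A chain $S_{\mathfrak{C};i}$ in $\mathcal{D}_{P_2}$ (with $\mathfrak{C} = \{c_1,c_1+1,\dots,c_{r-1},c_{r-1}+1\}$) consists of vertices at indices $u \in \{i, \dots, p-i+1\}$ in levels $c_i, c_i+1$ and the two "rails" $u \in \{i, p-i+1\}$ in higher levels; under $\iota_2$ each such level $p \geq a$ becomes $p+2$ and each index $u$ becomes $u+1$, so the rails $u \in \{i, p-i+1\}$ at level $p$ map to rails $u' \in \{i+1, (p+2)-(i+1)\}$ at level $p+2$ — exactly the shape of an $S_{\mathfrak{A};i+1}$ piece shifted by one. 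The claim is that setting $b_1 = a$ and $b_{i+1} = c_i + 2$ for those $c_i \geq a$ (and $b_{i+1}=c_i$ for $c_i < a$, which cannot actually occur together with $b_1=a$ being a valid smallest element unless the indices interleave — this needs care), the set $\mathfrak{A} = \{b_1,b_1+1,\dots,b_r,b_r+1\}$ has $U_{\mathfrak{A}} = U_a \cup \iota_2(U_{\mathfrak{C}})$. I would verify this by comparing the two sets level by level: in level $a$ or $a+1$, only $U_a$ contributes (and it contributes everything); in levels below $a$, only $\iota_2(U_{\mathfrak{C}})$ contributes and $\iota_2$ is the identity there; in levels above $a+1$, a vertex of $U_a$ occupies precisely indices $u \in \{1, p-1+1\} = \{1,p\}$ wait — more precisely $U_a$'s contribution in a high level $p$ is the two outermost columns, which become the innermost "$i=1$" rails, while $\iota_2$ shifts the remaining rails outward by one in index, matching $i=2,\dots,r$.

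The main obstacle I anticipate is bookkeeping of indices near the "seam" at level $a$: the map $\iota_2$ both relabels levels (by $+2$) and shifts the column index (by $+1$) for $p \geq a$, so the $i$-th chain of $U_{\mathfrak{C}}$ in $\mathcal{D}_{P_2}$ must be shown to become the $(i+1)$-st chain of $U_{\mathfrak{A}}$ in $\mathcal{D}_P$ — the index shift $u \mapsto u+1$ is exactly what turns the $\{i, p-i+1\}$ rails into $\{i+1, (p+2)-(i+1)\}$ rails, which is the one point that makes the whole thing work and the one point where an off-by-one error would be fatal. A secondary subtlety is that the $c_i$ produced by the inductive hypothesis need not satisfy $c_i \geq a$; a low-level simple $U$-chain $U_{c}$ with $c < a$ in $\mathcal{D}_{P_2}$ is unaffected by $\iota_2$, and one must check that $U_a \cup U_c$ is still an honest $2$-$U$-chain $U_{c,a}$ in $\mathcal{D}_P$ — this is essentially the content of Definition \ref{general U chain} with $\mathfrak{A} = \{c,c+1,a,a+1\}$ and follows once one observes the two constituent chains are disjoint and have the required form, but it is worth stating explicitly. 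Once the seam analysis is done, reassembling $\mathfrak{A}$ from $\{a\}$ and the (possibly shifted) $c_i$'s and checking $S_{\mathfrak{A};i} \cap S_{\mathfrak{A};j} = \emptyset$ is routine from the disjointness already recorded in Definition \ref{general U chain} and the injectivity of $\iota_2$.
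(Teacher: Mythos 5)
Your overall strategy is viable and genuinely different from the paper's: you peel off the \emph{first} chain and transport the inductively obtained $(r-1)$-$U$-chain of $\mathcal{D}_{P_2}$ back through $\iota_2$, whereas the paper inducts on the initial segment $(C_1,\dots,C_{r-1})$, describes $P_r$ directly in terms of $P$ and the $b_j$'s, and adjoins the last chain as the pair $\{p_u,p_u+1\}$. In the ``generic'' situation (no $c_i$ equal to $a-1$) your level-by-level coverage comparison does close the argument, provided you do it purely at the level of sets: note that when some $c_i<a$ the piece-by-piece matching you describe (``the $i$-th chain of $U_{\mathfrak{C}}$ becomes the $(i+1)$-st chain of $U_{\mathfrak{B}}$'') is literally false, since $U_a$ occupies the outermost rails $\{1,p\}$ and hence is \emph{not} the piece $S_{\mathfrak{B};t+1}$; only the union of columns covered at each level agrees. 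Relatedly, your assertion that a chain $U_c$ with $c<a$ in $\mathcal{D}_{P_2}$ ``is unaffected by $\iota_2$'' is incorrect: its rails at levels $\geq a$ of $P_2$ are shifted to $\{2,p-1\}$ at level $p$ of $P$, and it is exactly this shift, combined with $U_a$'s rails $\{1,p\}$, that produces the required $\{1,2,p-1,p\}$ pattern.

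The genuine gap is the seam case you flag but do not resolve: some $c_i=a-1$, i.e.\ the inductive $(r-1)$-$U$-chain uses the pair $\{a-1,a\}$ in $\mathcal{D}_{P_2}$. This case really occurs (in the paper's Example \ref{new bridge}, $P=(5,4,3^2,2,1)$, $C_1=U_3$, and the next chain can be $U_2$ in $\mathcal{D}_{P_2}$ with $P_2=(3,2,1)$), and there your recipe breaks down: keeping $c_i=a-1$ and inserting $a$ would give the illegal index set $\{a-1,a,a,a+1\}$, and the claim ``$U_a\cup U_c=U_{c,a}$'' is false for $c=a-1$. What actually happens is a re-bracketing of levels: $U_a\cup\iota_2(U_{a-1})$ covers the four consecutive levels $a-1,a,a+1,a+2$ completely (level $a+2$ becomes full because the upper half of the $\{a-1,a\}$ pair of $P_2$ is pushed up to level $a+2$ and fills in between $U_a$'s rails), so the union is $U_{a-1,a+1}$, with pairs $\{a-1,a\}$ and $\{a+1,a+2\}$ --- in the example, $C_1\cup C_2=U_{2,4}$, not $U_{2,3}$. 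Making your proof complete therefore requires an explicit treatment of this case (and of possible chains of such adjacencies when several transported pairs abut $\{a,a+1\}$), using the fact that Definition \ref{general U chain} permits adjacent pairs $b_{j+1}=b_j+2$; as written, the proposal stops exactly at the point where the nontrivial combinatorics happens.
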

\begin{proof}
Suppose that $P=(p_s^{n_s}, \cdots, p_1^{n_1})$ such that $p_s>\cdots>p_1$ and $n_i>0$, for $1\leq i \leq s$. We give a proof by induction on $r$. 

By the definition of a $U$-process, there is nothing to prove for $r=1$. Assume that $r>1$, $\frak{C}=(C_1, \cdots, C_r)$ is a $U$-process for $P$, and  $C_1 \cup \cdots \cup C_{r-1}=U_{b_1, \cdots,b_{r-1}}$, for some $b_1, \cdots,b_{r-1}$ such that $b_1<b_1+1 <\cdots<b_{r-1}<b_{r-1}+1$. By the definition of a $U$-process, $C_r=\iota_1\cdots\iota_r(C_r^\dagger)$, where $C_r^\dagger$ is a maximum simple $U$-chain in the $\mathcal{D}_{P_{r}}$. Since $C_1 \cup \cdots \cup C_{r-1}=U_{b_1, \cdots,b_{r-1}}$, we can write $P_{r}=(q_s^{m_s}, \cdots, q_1^{m_1})$ such that
$$q_i=p_i-2r_i, \mbox{ where }r_i=\mid \{b_j\,\mid \, 1\leq j \leq r-1 \mbox{ and } b_j<p_i\}\mid ); \mbox{ and }$$
$$m_i=\left\{\begin{array}{lll}
0&&\mbox{ if } p_i\in\{b_1, b_1+1, \cdots, b_{r-1},b_{r-1}+1\}\\
n_i&&\mbox{ otherwise.}
\end{array}\right.$$
Suppose that $C^\dagger_r$ is the simple $U$-chain $U_a$ in $\mathcal{D}_{P_{r}}$. So there exists an integer $u$ such that $a=q_{u}$ with $m_u\neq 0$. Thus $C_1 \cup \cdots \cup C_r$ is equal, as a set, to the $r$-$U$-chain $U_\frak{B}$ where $\frak{B}=\{b_1,b_1+1, \cdots, b_{r-1},b_{r-1}+1,p_u,p_u+1\}$, as desired.

\end{proof}

\begin{theorem} \label{unique Oblak}

Let $P$ be a partition and $\frak{C}=(C_1, \cdots, C_r)$ a $U$-process for $P$. Then $$\mid \cup_{i=1}^{r} C_i\mid =\max \{\mid U_\frak{B}\mid  \mbox{ such that } U_\frak{B} \mbox{ is an }r\mbox{-}U\mbox{-chain in }  \mathcal{D}_P\}.$$

In particular, for any full $U$-process $\frak{C}$ of $P$,  we have $Q_{\frak{C}}(P)=\lambda_U(P)$.
\end{theorem}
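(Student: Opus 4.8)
The plan is to prove the statement by induction on $r$, the number of steps in the $U$-process, using Proposition \ref{U chain} as the key replacement tool and Proposition \ref{BIK shelling} to reinterpret each $C_1\cup\cdots\cup C_r$ as a genuine $r$-$U$-chain in $\mathcal{D}_P$. The base case $r=1$ is immediate: $C_1$ is, by construction, a maximum simple $U$-chain in $\mathcal{D}_P$, so $|C_1|$ equals the maximum cardinality of a $1$-$U$-chain. For the inductive step, suppose the result holds for all $U$-processes of length less than $r$.

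First I would show the inequality $|\cup_{i=1}^r C_i|\le \max\{|U_{\frak{B}}|\}$. This is essentially free: by Proposition \ref{BIK shelling}, $\cup_{i=1}^r C_i = U_{\frak{B}}$ for some $r$-$U$-chain, so its cardinality is automatically at most the maximum over all such chains. The substance of the theorem is the reverse inequality: the greedy $U$-process does at least as well as any $r$-$U$-chain. So let $U_{b_1,\dots,b_r}$ be an arbitrary $r$-$U$-chain in $\mathcal{D}_P$; I must show $|U_{b_1,\dots,b_r}|\le |\cup_{i=1}^r C_i|$. Let $C_1 = U_a$ be the maximum simple $U$-chain chosen at the first step. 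Since $U_a$ is a \emph{maximum} simple $U$-chain in $\mathcal{D}_P$, Proposition \ref{U chain} applies: there is an index $u$ with $b_{u-1}<a<b_{u+1}-1$ and $|U_{b_1,\dots,b_r}|\le |U_{b_1,\dots,b_{u-1},a,b_{u+1},\dots,b_r}|$. So without loss of generality I may assume the competing $r$-$U$-chain already uses $a$ as one of its indices, say $U_{b_1,\dots,b_{j-1},a,b_{j+1},\dots,b_r}$ with the $b$'s (other than $a$) in increasing order.

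The crucial step is then to strip off the index $a$ from both sides and descend into $\mathcal{D}_{P_2}$, where $P_2$ is the partition obtained by deleting $C_1$. Here I would use Lemma \ref{chain length lem} (the $|U_{\frak{A}}| = |U_{\frak{A}_{\hat i}}| + |U_{a_i}| - (\text{correction terms})$ identity) to relate the cardinality of an $r$-$U$-chain in $\mathcal{D}_P$ containing $a$ to that of an $(r-1)$-$U$-chain in $\mathcal{D}_{P_2}$. Concretely, the relabeling map $\iota_2\colon \mathcal{D}_{P_2}\to\mathcal{D}_P$ of Equation \ref{Oblak pi} should carry an $(r-1)$-$U$-chain $U_{b_1',\dots,b_{r-1}'}$ of $\mathcal{D}_{P_2}$ to the set-difference $U_{\frak{B}}\setminus C_1$ for the corresponding $r$-$U$-chain $U_{\frak{B}}$ of $\mathcal{D}_P$ containing $a$, and Lemma \ref{chain length lem} pins down exactly how the cardinality changes — and this change depends only on $a$ and on which indices of $P$ are removed, not on the remaining choices. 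Thus maximizing $|U_{\frak{B}}|$ over $r$-$U$-chains of $\mathcal{D}_P$ through $a$ is equivalent to maximizing $|U_{b_1',\dots,b_{r-1}'}|$ over $(r-1)$-$U$-chains of $\mathcal{D}_{P_2}$. By the inductive hypothesis applied to the $U$-process $(C_2,\dots,C_r)$ for $P_2$ (which is a legitimate $U$-process of length $r-1$), $|\cup_{i=2}^r C_i|$ equals this maximum over $\mathcal{D}_{P_2}$. Pulling everything back up through $\iota_2$ and adding $|C_1|$ gives $|\cup_{i=1}^r C_i| \ge |U_{b_1,\dots,b_{j-1},a,b_{j+1},\dots,b_r}| \ge |U_{b_1,\dots,b_r}|$, completing the induction.

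Finally, the ``in particular'' clause: for a full $U$-process $\frak{C}=(C_1,\dots,C_r)$, we have $C_1\cup\cdots\cup C_r = \mathcal{D}_P$, and for each $k\le r$ the truncation $(C_1,\dots,C_k)$ is itself a $U$-process, so by the main statement $|\cup_{i=1}^k C_i| = u_k$, the maximum cardinality of a $k$-$U$-chain in $\mathcal{D}_P$. Hence $|C_k| = u_k - u_{k-1} = \lambda_{U,k}(P)$ for every $k$ (with $u_0=0$), which says precisely $Q_{\frak{C}}(P)=\lambda_U(P)$. I expect the main obstacle to be the bookkeeping in the descent step: one must check carefully that $\iota_2$ induces a cardinality-preserving-up-to-an-explicit-constant bijection between $(r-1)$-$U$-chains of $\mathcal{D}_{P_2}$ and $r$-$U$-chains of $\mathcal{D}_P$ that contain $a$, and that the constant extracted from Lemma \ref{chain length lem} is genuinely independent of the remaining $b_i'$, so that ``maximize downstairs, then add a constant'' really does compute the maximum upstairs. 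The subtlety is compounded by the fact that $\frak{A}$ in Lemma \ref{chain length lem} need not consist of parts of $P$, and one has to track how removing $C_1$ shifts which integers are (still) parts.
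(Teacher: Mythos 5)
Your proposal is correct and follows essentially the same route as the paper: the easy inequality via Proposition \ref{BIK shelling}, induction on $r$, the replacement step from Proposition \ref{U chain}, descent to the smaller partition via the relabeling map, and the inductive hypothesis applied to the pulled-back $U$-process $(\iota^{-1}(C_2),\dots,\iota^{-1}(C_r))$, then adding back $|C_1|$. The bookkeeping you flag as the main obstacle is handled in the paper without Lemma \ref{chain length lem}: it observes directly from Definition \ref{general U chain} that $U_{b_1,\dots,b_{u-1},a,b_{u+1},\dots,b_r}$ is the disjoint union of $U_a$ and $\iota(U')$ with $U'=U_{b_1,\dots,b_{u-1},b_{u+1}-2,\dots,b_r-2}$ in $\mathcal{D}_{P'}$, so the ``constant'' is simply $|U_a|$, independent of the remaining indices, exactly as you anticipated.
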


\begin{proof}

By Proposition \ref{BIK shelling}, there exist $a_1, \cdots, a_r$ such that $a_1<a_1+1<\cdots<a_r<a_r+1$ and $C_1 \cup \cdots \cup C_r= U_{a_1, \cdots \, a_{r}}$. Therefore, \begin{equation}\label{ey baba}\mid \cup_{i=1}^{r} C_i\mid  \leq \max \{\mid U_\frak{B}\mid  \mbox{ such that } U_\frak{B} \mbox{ is an }r\mbox{-}U\mbox{-chain in }  \mathcal{D}_P\}.\end{equation}

We prove the converse inequality by induction on $r$. 

For $r=1$, the claim is clear by definition. Now suppose that $r>1$ and that for any $m<r$ and any $U$-process $\frak{C'}=(C'_1, \cdots, C'_m)$ of a partition $P'$, the desired equality holds. 

Assume that $U_{b_1, \cdots, b_r}$ is an arbitrary $r$-$U$-chain in $\mathcal{D}_P$. Since $\frak{C}=(C_1, \cdots, C_r)$ is a $U$-process for $P$, $C_1$ is a maximum simple $U$-chain in $\mathcal{D}_P$. Suppose that $C_1=U_a$. Then, by Proposition~\ref{U chain}, there exists a $u$ such that $1 \leq u \leq r$ and 
\begin{equation}\label{yek}\mid U_{b_1, \cdots, b_r}\mid  \leq \mid U_{b_1, \cdots, b_{u-1},a,b_{u+1}, \cdots, b_r}\mid .\end{equation}

Let $P'$ denote the partition corresponding to the vertices in $\mathcal{D}_P\setminus U_a$ and $\iota:\mathcal{D}_{P'}\to \mathcal{D}_{P}$ be the relabeling map given by Equation \ref{Oblak pi}. By definition of a $U$-process, $\cup_{i=2}^rC_i \subseteq \mathcal{D}_P\setminus U_a$, and therefore $\frak{C}'=(\iota^{-1}(C_2), \cdots, \iota^{-1}(C_r))$ is a $U$-process for $P'$. Also note that $\iota^{-1}(U_{b_1, \cdots, b_{u-1},a,b_{u+1}, \cdots, b_r})$ is the $(r-1)$-$U$-chain $U'=U_{b_1, \cdots, b_{u-1},b_{u+1}-2, \cdots, b_r-2}$ in $\mathcal{D}_{P'}$. Thus, by the inductive hypothesis, 
\begin{equation}\label{dow}
\mid \cup_{i=2}^r C_i\mid =\mid \cup_{i=2}^r \iota^{-1}(C_i)\mid  \geq \mid U'\mid .
\end{equation}

On the other hand, by definition of a $U$-chain (Definition \ref{general U chain}), $U_{b_1, \cdots, b_{u-1},a,b_{u+1}, \cdots, b_r}$ is the union of the two disjoint sets $U_a$ and $\iota(U')$. Therefore  
\begin{equation}\label{se}
\mid U_{b_1, \cdots, b_{u-1},a,b_{u+1}, \cdots, b_r}\mid =\mid U_a\mid +\mid U'\mid .
\end{equation}

Thus 
$$\begin{array}{llll}
\mid \cup_{i=1}^r C_i\mid &=\mid U_a\mid +\mid \cup_{i=2}^r C_i\mid &&\\
&\geq \mid U_a\mid + \mid U'\mid &&\mbox{(By Equation \ref{dow})}\\
&=\mid U_{b_1, \cdots, b_{u-1},a,b_{u+1}, \cdots, b_r}\mid &&\mbox{(By Equation \ref{se})}\\
&\geq \mid U_{b_1, \cdots, b_r}\mid .&&\mbox{(By Equation \ref{yek})}\\
\end{array}$$
This completes the proof of the theorem.
 \end{proof}

%
%
%

In view of Theorem \ref{unique Oblak}, Oblak's conjecture can be restated as $Q(P)=\lambda_U(P)$. We conclude this paper by showing that $\lambda_U(P)$ shares another property of the partition $Q(P)$, namely parts of the partition $\lambda_U(P)$ differ pairwise by at least 2. See \cite[Theorem 6]{KO} for the corresponding result for $Q(P)$ when char $\k=0$ and \cite[Theorem 1]{BI} when char $\k>n$.

\begin{lemma}\label{max U chain}
Let $n>1$ and $P=(\ldots, p^{n_p}, \ldots)$ be a partition of $n$. If $U_a$ is a maximum $U$-chain in $\mathcal{D}_P$ then $ \mid U_a \mid \geq 2,$ and $n_a+n_{a+1}>0.$
\end{lemma}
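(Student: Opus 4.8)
\textbf{Proof plan for Lemma \ref{max U chain}.} The plan is to unwind the formula from Lemma \ref{chain length lem} (or rather its special case giving $\mid U_a\mid$ directly), namely
$$\mid U_a\mid = a\,n_a + (a+1)\,n_{a+1} + 2\sum_{p>a+1} n_p,$$
and compare $U_a$ with neighbouring simple $U$-chains. First I would observe that since $P$ is a partition of $n>1$, the poset $\mathcal{D}_P$ is nonempty and has at least one vertex; moreover every vertex of $\mathcal{D}_P$ lies in \emph{some} simple $U$-chain (for a vertex in level $p$, it lies in $U_p$ or $U_{p-1}$), so the maximum simple $U$-chain $U_a$ is nonempty, giving $\mid U_a\mid\geq 1$. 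To upgrade $\geq 1$ to $\geq 2$, I would argue by contradiction: if $\mid U_a\mid=1$ then the displayed formula forces $\sum_{p>a+1}n_p=0$ and exactly one of the terms $a\,n_a$, $(a+1)n_{a+1}$ equals $1$ with the other $0$; since $a\geq 1$, the only possibility is $a=1$, $n_1=1$, $n_2=0$, and $n_p=0$ for all $p>2$. But then $P=(1)$, so $n=1$, contradicting $n>1$. Hence $\mid U_a\mid\geq 2$.

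For the second assertion, $n_a+n_{a+1}>0$, I would again argue by contradiction. Suppose $n_a=n_{a+1}=0$. Then from the formula $\mid U_a\mid = 2\sum_{p>a+1}n_p$. The key point is that when $n_a=n_{a+1}=0$, the chain $U_a$ contains, for each part $p>a+1$ of $P$ and each $k$, exactly the two vertices $(1,p,k)$ and $(p,p,k)$, i.e.\ only the ``top and bottom'' of each such column; whereas the chain $U_{a-1}$ (if $a\geq 2$; if $a=1$ there is nothing below and one uses $U_{a+1}$ instead — but $n_{a+1}=0$ makes $a+1$ not maximal-relevant, so I would just take the smallest part $p_1$ of $P$, which is $>a+1$, and compare with $U_{p_1-1}$ or $U_{p_1}$) would either pick up additional vertices from a lower level or, by the formula, have cardinality $\mid U_{a-1}\mid = (a-1)n_{a-1}+a\,n_a + 2\sum_{p>a}n_p$, which since $n_a=0$ equals $(a-1)n_{a-1}+2\sum_{p>a+1}n_p = (a-1)n_{a-1}+\mid U_a\mid \geq \mid U_a\mid$, with equality only if $n_{a-1}=0$. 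So unless $n_{a-1}=0$ as well, $U_a$ is not maximum — contradiction. Iterating this downward: if $n_a=n_{a+1}=0$ and $U_a$ is maximum, then $n_c=0$ for all $c\leq a+1$, hence all parts of $P$ exceed $a+1$, and then comparing $U_a$ with $U_{p_1}$ where $p_1$ is the smallest part gives $\mid U_{p_1}\mid = p_1 n_{p_1} + (p_1+1)n_{p_1+1}+2\sum_{p>p_1+1}n_p \geq p_1 n_{p_1} + 2\sum_{p>p_1+1}n_p$, and since $p_1\geq a+2> 2$ while the corresponding ``top+bottom only'' count for that column in $U_a$ is $2n_{p_1}$, we get $\mid U_{p_1}\mid - (\text{contribution pattern of }U_a) \geq (p_1-2)n_{p_1} > 0$ provided $n_{p_1}>0$, which holds. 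This contradicts maximality of $U_a$, so $n_a+n_{a+1}>0$.

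The main obstacle I anticipate is organizing the second part cleanly: the ``obvious'' comparison is with $U_{a-1}$, but one must handle the boundary case $a=1$ (no level below) and, more importantly, make sure that when $n_a=0$ the index $a$ is genuinely ``wasteful'' — the cleanest route is probably to show directly that if $n_a=n_{a+1}=0$ then replacing the implicit index by the smallest part $p_1$ of $P$ strictly increases cardinality, using only the closed formula for $\mid U_c\mid$ and the inequality $p_1-2\geq 0$ together with $p_1>a+1\geq 2$. I would therefore restructure the argument to avoid the downward iteration entirely: assume $n_a=n_{a+1}=0$, let $p_1$ be the smallest part of $P$ (so $p_1\neq a$, $p_1\neq a+1$), and compare $\mid U_a\mid$ with $\mid U_{p_1}\mid$ if $p_1>a+1$, or with $\mid U_a\mid$ rewritten when $p_1<a$; in the first case $\mid U_{p_1}\mid-\mid U_a\mid = (p_1-2)n_{p_1}+(p_1-1)n_{p_1+1}>0$ using $p_1\geq 2$ and the fact that not both $n_{p_1},n_{p_1+1}$ vanish (indeed $n_{p_1}>0$), while in the case $p_1<a$ one sees $U_a$ misses all vertices of level $p_1$ entirely yet some simple $U$-chain at level $\leq a$ would contain more, again contradicting maximality. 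Either way the maximality of $U_a$ is violated, completing the proof.
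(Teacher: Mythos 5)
Your argument is correct, but it reaches the two inequalities through different comparisons than the paper, and one sub-case is only sketched. For $\mid U_a \mid \geq 2$ the paper simply takes the largest part $b$ of $P$ and notes $\mid U_a\mid \geq \mid U_b\mid = b\,n_b \geq 2$ (using $n>1$ only when $b=1$); your parity-plus-case-analysis contradiction ruling out $\mid U_a\mid =1$ works, but is longer and needs the preliminary remark that some simple $U$-chain is nonempty, which the paper's direct comparison gives for free. For $n_a+n_{a+1}>0$ the paper also argues by contradiction but compares \emph{upward}: since $\mid U_a\mid =2\sum_{p>a+1}n_p\geq 2$, some part exceeds $a+1$, and taking $c$ the smallest such part yields $\mid U_c\mid >\mid U_a\mid$ because $c>a+1\geq 2$ forces $c\geq 3$; this single comparison makes your case split on the smallest part $p_1$ of $P$ unnecessary. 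Your case $p_1>a+1$ is essentially that same computation (note the strict inequality there requires $p_1\geq a+2\geq 3$, not merely $p_1\geq 2$ as you cite --- your case hypothesis does supply it, since $(p_1-2)n_{p_1}+(p_1-1)n_{p_1+1}$ can vanish when $p_1=2$ and $n_{3}=0$). Your case $p_1<a$ is only asserted (``one sees \ldots would contain more''); it is true and easy --- $\mid U_{p_1}\mid \geq p_1 n_{p_1}+2\sum_{p>a+1}n_p > 2\sum_{p>a+1}n_p = \mid U_a\mid$ since $n_{p_1}>0$ --- but you should write that line, or better, drop the case split and compare with the smallest part above $a+1$ as the paper does, which settles both situations at once. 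The earlier downward-iteration sketch via $U_{a-1}$ can also be made to work, but your own decision to restructure away from it was the right call.
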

\begin{proof}
Suppose that $b=\max\{b\,\mid \, n_b>0\}$. If $b=1$, then $n_b$ must be at least 2, and therefore $bn_b\geq 2$. If $b\geq 2$, then $bn_b\geq 2$. Thus $\mid U_a\mid\geq \mid U_b\mid\geq 2$.

We prove the second inequality by contradiction. Assume that $n_a=n_{a+1}=0$, then $2 \leq \mid U_a\mid=2\displaystyle{\sum_{p>a+1}n_p}$. Let $c=\min\{p\,\mid \, p>a+1 \mbox{ and } n_p>0\}$.  Since $c>a+1\geq 2$, we get $\mid U_a \mid < \mid U_{c}\mid $. This contradicts the maximality of $\mid U_a\mid$. So $n_a+n_{a+1}$ must be positive.

\end{proof}

\begin{proposition}
Let $n>1$ and $P=(\cdots, p^{n_p}, \cdots)$ be a partition of $n$. Then the parts of $\lambda_U(P)$ differ pairwise by at least 2.
\end{proposition}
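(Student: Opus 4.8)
The plan is to reduce the statement to the analysis of a single step of a $U$-process, using Theorem~\ref{unique Oblak}. By that theorem, the parts of $\lambda_U(P)$ are exactly $\mid C_1\mid, \ldots, \mid C_r\mid$ for any full $U$-process $\frak{C}=(C_1,\ldots,C_r)$ of $P$; moreover $\mid C_i\mid$ is the cardinality of a maximum simple $U$-chain $U_{a_i}$ in $\mathcal{D}_{P_i}$. Since the process is recursive, it suffices to prove the following local statement: if $P$ is a partition of $n>1$, $U_a$ is a maximum simple $U$-chain in $\mathcal{D}_P$, and $P'$ is the partition obtained by deleting $U_a$ from $\mathcal{D}_P$ (so $\mid C_1\mid = \mid U_a\mid$ for the first step, and $\mid C_2\mid,\ldots$ are the parts of $\lambda_U(P')$), then $\mid U_a\mid \geq \mid U_b\mid + 2$ for every maximum simple $U$-chain $U_b$ of $\mathcal{D}_{P'}$. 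Combined with induction on $n$ (the base case $n=1$ being vacuous, and noting the parts of $\lambda_U(P')$ already differ pairwise by at least $2$ by the inductive hypothesis once $n'<n$), this gives that all of $\mid C_1\mid > \mid C_2\mid \geq \cdots$ are spaced by at least $2$, which is the claim.

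The heart of the argument is therefore the inequality $\mid U_a\mid \geq \mid U_b\mid + 2$ comparing the first removed simple $U$-chain with any simple $U$-chain of the smaller poset. First I would set up the bookkeeping via Lemma~\ref{chain length lem} (with $r=2$) or directly from Definition~\ref{general U chain}: if $P=(\ldots,p^{n_p},\ldots)$ and $P'$ has multiplicities $n'_p$, then $n'_a=n'_{a+1}=0$, $n'_p=n_p$ for $p<a$, and $n'_{p-2}=n_p$ for $p>a+1$. I would express $\mid U_a\mid$ and $\mid U_b\mid$ (the latter computed in $\mathcal{D}_{P'}$) each as a sum $\sum_p (\text{coefficient}) \cdot n_p$ using the formula $\mid U_c\mid = c\,n_c + (c+1)n_{c+1} + 2\sum_{p>c+1}n_p$ from the proof of Lemma~\ref{chain length lem}, being careful to translate a simple $U$-chain $U_b$ of $\mathcal{D}_{P'}$ back to the index that the corresponding parts of $P$ carry. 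Then I would use maximality of $U_a$ in $\mathcal{D}_P$ — i.e. $\mid U_a\mid \geq \mid U_c\mid$ for the $c$ in $\mathcal{D}_P$ that lies "above" $b$ after deletion — to bound $\mid U_b\mid$, and I would invoke Lemma~\ref{max U chain} to get $\mid U_a\mid \geq 2$ and $n_a + n_{a+1} > 0$; the latter fact is what produces the strict gap, because the parts $a$ and $a+1$ of $P$ contribute to $\mid U_a\mid$ but are entirely absent from $\mathcal{D}_{P'}$, and every vertex of those levels in $\mathcal{D}_{P'}$ has disappeared.

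The main obstacle I anticipate is the case analysis forced by how deletion of $U_a$ interacts with the position of $b$ relative to $a$: a simple $U$-chain $U_b$ in $\mathcal{D}_{P'}$ with $b<a$ behaves differently from one with $b\geq a$ (which, under $\iota$, pulls back to a simple $U$-chain $U_{b'}$ of $\mathcal{D}_P$ with $b' = b+2 > a+1$), and one must check that in every case the "lost mass" $\mid U_a\mid - \mid U_b\mid$ is not merely $\geq 0$ but $\geq 2$. Concretely, when $U_b$ in $\mathcal{D}_{P'}$ corresponds to $U_{b'}$ in $\mathcal{D}_P$ with $b' > a+1$, the two simple $U$-chains $U_a$ and $U_{b'}$ of $\mathcal{D}_P$ satisfy $\mid U_a\mid \geq \mid U_{b'}\mid$ by maximality, while passing from $U_{b'}$ in $\mathcal{D}_P$ to $U_b$ in $\mathcal{D}_{P'}$ strictly decreases the length (the deleted levels $a,a+1$ each contribute $2n_a$ resp. $2n_{a+1}$ to $\mid U_{b'}\mid$ since $b' > a+1$, and $n_a+n_{a+1}>0$), giving $\mid U_a\mid \geq \mid U_{b'}\mid \geq \mid U_b\mid + 2$; the case $b<a$ is handled similarly but requires tracking the shift in the "$2\sum_{p>c+1}n_p$" tail. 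I would organize this as two short lemmas — one giving $\mid U_{b'}\mid \geq \mid U_b\mid + (n_a+n_{a+1})\cdot(\text{something}\geq 2)$ — and then conclude.
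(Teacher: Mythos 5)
Your high-level plan coincides with the paper's: use Theorem \ref{unique Oblak} to reduce everything to the one-step claim that if $U_a$ is a maximum simple $U$-chain in $\mathcal{D}_P$ and $P'$ is the partition left after deleting it, then any simple $U$-chain $U_b$ of $\mathcal{D}_{P'}$ that realizes the next part satisfies $\mid U_b\mid \leq \mid U_a\mid -2$, proved by comparing explicit length formulas and invoking Lemma \ref{max U chain}. The gap is in the quantitative core. In the one case you actually work out ($b\geq a$, so the comparison chain in $\mathcal{D}_P$ is $U_{b+2}$ with $b+2>a+1$), your stated mechanism is wrong: a simple $U$-chain $U_c$ contains no vertices at levels below $c$, so $\mid U_{b+2}\mid=(b+2)n_{b+2}+(b+3)n_{b+3}+2\sum_{p>b+3}n_p$ involves neither $n_a$ nor $n_{a+1}$, and the levels $a,a+1$ do not ``contribute $2n_a$ resp.\ $2n_{a+1}$'' to it. The true drop is $\mid U_{b+2}\mid-\mid U_b\mid_{\mathcal{D}_{P'}}=2(n_{b+2}+n_{b+3})$, coming from the shortening of the rows of lengths $b+2,b+3$, and it can vanish; to force it to be $\geq 2$ you must either apply Lemma \ref{max U chain} to $\mathcal{D}_{P'}$ (not to $\mathcal{D}_P$ --- the fact $n_a+n_{a+1}>0$ is what drives the opposite case $b+1<a$, so you have swapped the mechanisms of the two cases), or, as the paper does for an arbitrary simple $U$-chain, treat $n_{b+2}=n_{b+3}=0$ separately by comparing with $U_{c-1}$ for $c=\min\{p>b+3 \,\mid\, n_p>0\}$.

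The second missing piece is the boundary case $b=a-1$, which your dichotomy ``$b<a$ versus $b\geq a$'' buries inside the part you dismiss as handled similarly. It is not similar: here $U_{a-1}$ in $\mathcal{D}_{P'}$ mixes an unshifted level ($a-1$) with shifted ones (level $a$ of $P'$ comes from level $a+2$ of $P$), its length being $(a-1)n_{a-1}+a\,n_{a+2}+2\sum_{p>a+2}n_p$, and no single comparison $\mid U_a\mid\geq\mid U_c\mid$ yields the bound. The paper needs the identity $\mid U'\mid=\mid U_{a-1}\mid-\mid U_a\mid+\mid U_{a+1}\mid-2(n_{a+1}+n_{a+2})$ when $n_{a+1}+n_{a+2}>0$, and a separate argument when $n_{a+1}=n_{a+2}=0$ (using $n_a>0$ from Lemma \ref{max U chain} and $a\geq 2$, hence $a\,n_a\geq 2$). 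Finally, since you restrict to maximum simple $U$-chains of $\mathcal{D}_{P'}$ and would invoke Lemma \ref{max U chain} there, you also need a word about the degenerate situation where $\mathcal{D}_{P'}$ is too small for that lemma (the paper sidesteps this by proving the claim for an arbitrary, possibly empty, simple $U$-chain of $\mathcal{D}_{P'}$). So the skeleton matches the paper, but where the gap of $2$ actually comes from in each of the three positions of $b$ relative to $a$ is either misattributed or missing, and these are exactly the places where the proof has content.
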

\begin{proof}
By Theorem \ref{unique Oblak} and the inductive definition of a $U$-process, it is enough to prove the following claim.

\noindent{\bf Claim.} Let $U_a$ be a maximum simple $U$-chain in $\mathcal{D}_P$ and let $P'$ be the partition corresponding to the vertices in $\mathcal{D}_P\setminus U_a$. If $U'$ is a simple $U$-chain in $\mathcal{D}_{P'}$, then $\mid U_a \mid \geq \mid U' \mid+2.$ 

By Lemma \ref{max U chain}, there is nothing to prove if $U'$ is empty. So assume that $U'=U_{b}$ in $\mathcal{D}_{P'}$ is not empty. We have
$$\begin{array}{lll}
U'&=\{(u,p,k)\in \mathcal{D}_{P'}\,\mid \, p\in\{b, b+1\} \mbox{ and } 1\leq u \leq p\}\\
&\cup \{(u,p,k)\in \mathcal{D}_{P'}\,\mid \, p>b+1 \mbox{ and } u\in\{1,p\} \}.
\end{array}$$

Recall that the relabeling map of Equation \ref{Oblak pi} is an injective map from $\mathcal{D}_{P'}$ to $\mathcal{D}_{P}$.

 \medskip
 
{\bf Case 1.} If $a>b+1$, then 
$$\begin{array}{ll}
\mid U' \mid=\mid \iota(U')\mid&=\displaystyle{bn_b+(b+1)n_{b+1}+2\sum_{b+1<p<a}n_p+2\sum_{p\geq a+2}n_p}\\
&= \mid U_b \mid-2(n_a+n_{a+1}).
\end{array}$$ 
Here $U_b$ is the simple $U$-chain in $\mathcal{D}_P$. By Lemma \ref{max U chain}, and maximality of $\mid U_a \mid$, we get $\mid U' \mid \leq \mid U_b \mid-2\leq \mid U_a \mid -2.$
This completes the proof of the claim in this case.
\medskip

{\bf Case 2.} If $a=b+1$, then 
$$\begin{array}{ll}
\mid U' \mid=\mid \iota(U')\mid&=bn_{b}+(b+1)n_{b+3}+2\displaystyle{\sum_{p>b+3}n_p}\\
&=(a-1)n_{a-1}+a\, n_{a+2}+2\displaystyle{\sum_{p>a+2}n_p}.
\end{array}$$
{\bf Case 2.1.} If $n_{a+1}=n_{a+2}=0$. Then by Lemma \ref{max U chain} $n_a>0$, and since $a=b+1\geq 2$, we have $an_a\geq 2$. So we get the desired inequality $$\mid U' \mid=\mid U_{a-1} \mid -a\, n_a\leq \mid U_a \mid -2.$$

{\bf Case 2.2.} If $n_{a+1}+n_{a+2}>0$. Then $$\mid U' \mid=\mid U_{a-1} \mid-\mid U_a \mid +\mid U_{a+1}\mid -2(n_{a+1}+n_{a+2})\leq \mid U_a \mid-2.$$
This completes the proof of Case 2.
\medskip

{\bf Case 3.} If $a\leq b$, then 
$$\begin{array}{ll}
\mid U' \mid=\mid \iota(U')\mid&=\displaystyle{b\, n_{b+2}+(b+1)n_{b+3}+2\sum_{p>b+3}n_p}\\
&= \mid U_{b+2} \mid-2(n_{b+2}+n_{b+3}).
\end{array}$$ 
{\bf Case 3.1.} If $n_{b+2}=n_{b+3}=0$. Then $0<\mid U' \mid =2\displaystyle{\sum_{p>b+3}n_p}.$ Let $c=\min\{p\,\mid \, p>b+3 \mbox{ and } n_p>0\}.$ Then $\mid U' \mid=\mid U_{c-1} \mid -(c-2)n_c$. Since $c>b+3\geq 4$ and $n_c>0$, we get $$\mid U' \mid=\mid U_{c-1} \mid -(c-2)n_c \leq \mid U_{c-1} \mid-2 \leq \mid U_a \mid -2,$$ as desired.

{\bf Case 3.2.} If $n_{b+2}+n_{b+3}>0$, then the desired inequality is clear by maximality of $\mid U_a \mid$.
This completes the proof of the proposition.

\end{proof}

In \cite{minpart} we further study the poset $\mathcal{D}_P$ and the partition $\lambda_U(P)$ and give an explicit formula for its smallest part $\mu(P)$. By enumerating the disjoint maximum antichains in $\mathcal{D}_P$ and use of results from $\cite{Oblak}$ and \cite{IK}, we prove that the smallest part of $Q(P)$ is $\mu(P)$ as well. This, combined with Oblak's formula for the index of $Q(P)$ (\cite[Theorem 6]{Oblak} for char $\k=0$, and \cite[Corollary 3.10]{IK} for any infinite field $\k$), gives an explicit formula for $Q(P)$, when it has at most 3 parts ({\it i.e.} when $P$ can be written as a union of 3 almost rectangular sub partitions).

\bibliography{ref}

\begin{thebibliography}{10}

\bibitem{Basili03}
R.~Basili.
\newblock On the irreducibility of commuting varieties of nilpotent matrices.
\newblock {\em J. Algebra}, 268(1):58--80, 2003.

\bibitem{BI}
R.~Basili and A.~Iarrobino.
\newblock Pairs of commuting nilpotent matrices, and {H}ilbert function.
\newblock {\em J. Algebra}, 320(3):1235--1254, 2008.

\bibitem{BIK}
R.~Basili, A.~Iarrobino, and L.~Khatami.
\newblock Commuting nilpotent matrices, and {A}rtinian algebras.
\newblock {\em J. Commut. Algebra}, 2(3):295--325, 2010.

\bibitem{BKO}
R.~Basili, T.~Ko{\v{s}}ir, and P.~Oblak.
\newblock Some ideas from {L}jublijana.
\newblock {\em Preprint}, 2008.

\bibitem{Britz-Fomin}
T.~Britz and S.~Fomin.
\newblock Finite posets and {F}errers shapes.
\newblock {\em Advances Math.}, 158:86--127, 2001.

\bibitem{Greene}
C.~Greene.
\newblock Some partitions associated with a partially ordered set.
\newblock {\em J. Comb. Theory, Ser. A}, 20:69--79, 1976.

\bibitem{HW}
T.~Harima and J.~Watanabe.
\newblock The commutator algebra of a nilpotent matrix and an application to
  the theory of commutative {A}rtinian algebras.
\newblock {\em J. Algebra}, 319(6):2545--2570, 2008.

\bibitem{IK}
A.~Iarrobino and L.~Khatami.
\newblock Bound on the {J}ordan type of a generic nilpotent matrix commuting
  with a given matrix.
\newblock {\em arXiv:1204.4635}, 2012.

\bibitem{minpart}
L.~Khatami.
\newblock The smallest part of the generic partition of the nilpotent
  commutator of a nilpotent matrix.
\newblock {\em Preprint}, 2012.

\bibitem{KO}
T.~Ko{\v{s}}ir and P.~Oblak.
\newblock On pairs of commuting nilpotent matrices.
\newblock {\em Transform. Groups}, 14(1):175--182, 2009.

\bibitem{Oblak}
P.~Oblak.
\newblock The upper bound for the index of nilpotency for a matrix commuting
  with a given nilpotent matrix.
\newblock {\em Linear Multilinear Algebra}, 56(6):701--711, 2008.

\bibitem{Oblak2}
P.~Oblak.
\newblock On the nilpotent commutator of a nilpotent matrix.
\newblock {\em Linear Multilinear Algebra}, 60(5):599--612, 2012.

\bibitem{Pa}
D.~I. Panyushev.
\newblock Two results on centralisers of nilpotent elements.
\newblock {\em J. Pure Appl. Algebra}, 212(4):774--779, 2008.

\bibitem{Saks}
M.~Saks.
\newblock Dilworth numbers, incidence maps and product partial orders.
\newblock {\em SIAM J. Alg. Discr. Meth.}, 1:211--215, 1980.

\end{thebibliography}
\bibliographystyle{abbrv}
\nocite{Oblak2}
\bigskip

{\sc Department of Mathematics, Union College, Schenectady, NY 12308} \par
{\it E-mail Address}: khatamil@union.edu
\end{document}